\newtheorem{thm}{Theorem}
\newtheorem{prop}[thm]{Proposition}
\newcommand{\mbb}{\mathbb}
\renewcommand{\Re}{\mbb{R}}
\newcommand{\set}[2]{\left\{ #1\ \left| \ #2 \right. \right\}}
\newcommand{\innerprod}[2]{\langle{#1},{#2}\rangle}
\newcolumntype{C}[1]{>{\centering\arraybackslash}m{#1}}
\newcommand\mystretch{1.05}
\newcommand\mywidth{0.95}
\begin{document}
\title{\vspace{-1cm}Approximate Dynamic Programming for Delivery Time Slot Pricing: \\a Sensitivity Analysis}
\author{Denis Lebedev, Kostas Margellos, %\IEEEmembership{Fellow, IEEE}, 
Paul Goulart %, \IEEEmembership{Member, IEEE}
\thanks{Research is supported by SIA Food Union Management. Corresponding author: Denis Lebedev.}
\thanks{The authors are with the Department of Engineering Science, University of Oxford, Oxford, OX1 3PJ, United Kingdom (e-mail: \{denis.lebedev, kostas.margellos, paul.goulart\}@eng.ox.ac.uk).}}
\date{}
\maketitle

%\doublespacing

%\begin{abstract}                % Abstract of not more than 250 words.
\textbf{\textit{Abstract}---We consider the revenue management problem of finding profit-maximising prices for delivery time slots in the context of attended home delivery. This multi-stage optimal control problem admits a dynamic programming formulation that is intractable for realistic problem sizes due to the so-called ``curse of dimensionality''. Therefore, we study three approximate dynamic programming algorithms both from a control-theoretical perspective and in a parametric numerical case study. Our numerical analysis is based on real-world data, from which we generate multiple scenarios to stress-test the robustness of the pricing policies to errors in model parameter estimates. Our theoretical analysis and numerical benchmark tests show that one of these algorithms, namely gradient-bounded dynamic programming, dominates the others with respect to computation time and profit-generation capabilities of the delivery slot pricing policies that it generates. Finally, we show that uncertainty in the estimates of the model parameters further increases the profit-generation dominance of this approach.}
%\end{abstract}

%\begin{IEEEkeywords}
%Approximate dynamic programming, revenue management, attended home delivery.
%\end{IEEEkeywords}

%===============================================================================
%\doublespacing
\section{Introduction}\label{sec:intro}
Online grocery sales have been on the rise for the past few years. U.S. households are predicted to spend over \$100 billion per year on online grocery shopping in 2022 according to the Food Marketing Institute \cite{FMI2018} and up to \$133.8 billion per year according to GlobalData \cite{ONESPACE2018}. Similar developments take place in Europe where, e.g. in the Netherlands, online grocery sales are predicted to grow annually by 14\% to reach a share of 16\% of all grocery sales in 2030 \cite{ROLANDBERGER2019}. %The prevalent COVID-19 crisis is likely to accelerate this growth even further \cite{SUPERMARKETNEWS2020}.

One of the main holdbacks for growth of online supermarkets is the increased cost of home delivery compared with the logistics of brick-and-mortar supermarkets \cite{GALANTE2013}. Moreover, another logistical problem for online supermarkets is that they have to fulfil \emph{attended} home delivery, i.e.\ to deliver groceries to customers in pre-agreed delivery time windows. To this end, customers are asked to select a delivery time window as part of the purchase on the sales website. From the side of the product-selling company, this poses an optimisation question: How should one optimally adjust prices for delivery slots over time to maximise profits, by taking into account how customers respond to price changes and how customer choice affects delivery costs? We denote this question as the \emph{revenue management problem in attended home delivery}.

Broadly speaking and independently of any specific problem characteristics, revenue management problems in attended home delivery can be viewed as optimal control problems. The dynamics of customers choosing delivery time windows on the booking website form the \emph{plant} that we seek to control. We can measure the customer choice behaviour by keeping track of placed orders, which we can use as \emph{state feedback}. An optimal control law would then use information from the states to update delivery slot prices, which serve as \emph{control inputs} to our plant as shown schematically in Fig.\ \ref{fig:controlview} below.

\begin{figure}[H]
\centering
\includegraphics[width=0.6\linewidth]{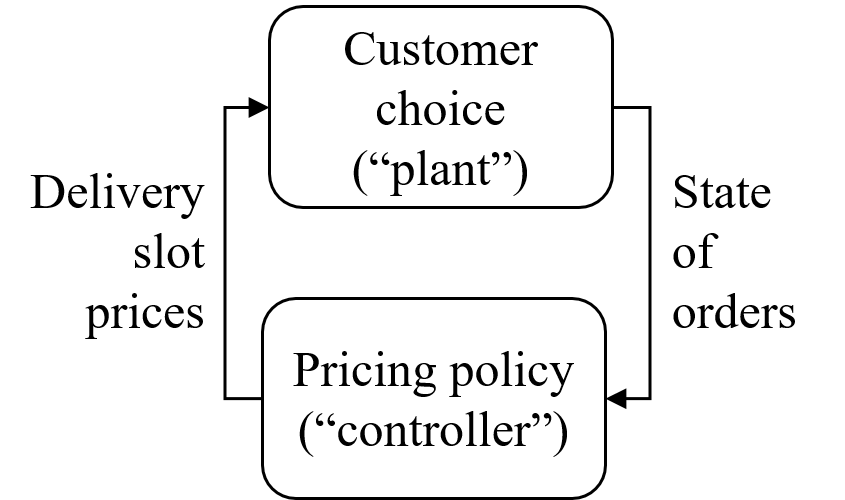}
\caption{A feedback control view of the revenue management in attended home delivery problem.}
\label{fig:controlview}
\end{figure}

In principle, the exact state of orders is high-dimensional. For example, it would need to represent locations of all customers and their chosen delivery time slot, if any. For industry-sized problems the number of states required to compute the optimal pricing policy exactly for all states becomes prohibitively large. Therefore, several model simplifications have been proposed in the literature; see \cite{STRAUSSETAL2018} for an overview.

In this paper, we focus on the state-space representation of \cite{YANG2017,YANG2016}. In this set-up, we split the delivery area  into several sub-areas, each of which is served by a single delivery vehicle. We can then solve the optimal delivery slot pricing problem for each delivery sub-area separately. In that case, the dimensionality of the state is the number of delivery time slots of any delivery day. The state of orders can then be thought of as a vector whose entries represent the number of deliveries for every delivery time slot in a particular sub-area. 
 
For this problem, \cite{YANG2017} proposes a dynamic programming (DP) formulation and an approximate DP scheme. Their algorithm approximates the exact value function of the DP as an affine function in the vector of states. In its conclusions, \cite{YANG2017} suggests that a possible direction for further research would be to explore non-linear approximate value functions. This direction is also proposed for future exploration in the conclusions of \cite{KOCH2020}. The recently developed approximate DP algorithm in \cite{ZHANG2019} was derived outside the attended home delivery literature and it provides such non-linear approximate value functions. However, we show in this paper that both of these algorithms suffer from theoretical limitations from an optimal control view. These can be overcome by a third approximate DP algorithm, presented in \cite{LEBEDEVETAL2020A}, termed gradient-bounded DP. The main contributions of this paper are thus:
\begin{enumerate}
\item We explain the theoretical limitations of the affine value function approximation algorithm from \cite{YANG2017} by showing that it results in an open loop controller, thus motivating the use of non-linear value function approximations, which provide state feedback.
\item We adapt the non-linear stochastic dual DP algorithm from \cite{ZHANG2019} to the revenue management problem in attended home delivery and show we can use a simplified version of the algorithm, which neglects a regularisation term that is redundant for the problem under study in this paper. However, despite it being able to work as a closed loop controller, we show that non-linear stochastic dual DP imposes difficulties in the computation of the optimal control policy since it requires the solution of a non-convex optimisation problem.
\item We then show that our gradient-bounded DP algorithm from \cite{LEBEDEVETAL2020A} can overcome the limitations of the other two algorithms, since it provides non-linear value function approximations, which can be computed using convex optimisation. We demonstrate the efficacy of this approach by means of a multi-scenario case study based on data from \cite{YANG2017}, in which we benchmark the performance of all three algorithms when the model parameters are known exactly. Furthermore, we include a detailed model parameter sensitivity analysis, which stress-tests the robustness of the algorithms against modelling errors, thus seeking to provide a balanced view on how these algorithms are likely to perform in practice. Such a comparative numerical case study is new for the problem as formulated in the next section and hence, it complements the numerical studies on attended home delivery conducted on different problem formulations, e.g.\ in \cite{KOCH2020}.
\end{enumerate}

The structure of the remaining paper is as follows: We formally define the underlying optimisation problem and its DP formulation in Section \ref{sec:problem}. Section \ref{sec:gen_approxalg} presents a general, sample-based approximate DP algorithm and we explain how the three algorithms considered in this paper are special cases of this general algorithm. In that section, we also elaborate on the main theoretical limitations of the first two algorithms and how the third overcomes these. Since the profits generated by all three algorithms are random variables, we explain how we quantify their profit-generation performance in Section \ref{sec:bounds_case}. We then demonstrate how the gradient-bounded DP algorithm dominates the two other algorithms by means of a multi-scenario numerical case study and robustness to uncertainty analysis in Section \ref{sec:casestudy}. Finally, we conclude the paper and provide some directions for future research in Section \ref{sec:conclusions}.

\emph{Notation:} Given some $s\in \mathbb{N}$, let $1_s$ be a column vector of all zeros apart from the $s$-th entry, which equals 1. Furthermore, we define the convention that $1_0$ is a vector of zeros, i.e.\ $1_0:=0$. Let $\mathbf{1}$ denote a vector of ones. Let $\innerprod{\cdot}{\cdot}$ denote the standard inner product of its arguments. Let $\mathbb{E}$ denote the expectation operator, let $\Pr(\cdot)$ denote the probability of its argument and let $\mathbbm{1}(\cdot)$ denote the indicator function.

\section{Problem statement}\label{sec:problem}
\subsection{Multi-stage optimal control problem formulation}
Revenue management in attended home delivery can be formulated as the following multi-stage optimal control problem for any delivery sub-area served by a single delivery vehicle: Customers are assumed to be allowed to make bookings in a finite time horizon and there is only a finite number of times that the online vendor can change delivery slot prices. Therefore, we consider a finite and discrete time horizon $T:=\{1,2,\dots,\bar{t}\}$. There is an additional time step $\bar{t}+1$, at which no bookings happen anymore, which we will use to define the terminal condition of the problem. Suppose that the delivery day is split into $n$ delivery time slots. Denote the set of delivery time slots by $S:=\{1,2,...,n\}$. As mentioned in Section \ref{sec:intro}, we focus on an aggregated state-space representation, where for any time step $t\in T\cup\{\bar{t}+1\}$ we define a state vector $x_t \in X \subset \mathbb{Z}^n$, whose entries are the number of orders placed in the respective delivery time slots. The set $X$ is defined by the maximum state vector $\bar{x}$, i.e.\ $X:=\set{x_t\in \mathbb{Z}^n}{0\leq x_t \leq \bar{x}}$. For any $t\in T$, we define the delivery charge vector \mbox{$d_t:=[d_{1,t},d_{2,t},\dots,d_{n,t}]^{\intercal}$}. Let the set of admissible delivery charge vectors be \mbox{$D:=\set{d_t}{d_{s,t} \in \left[\underline{d},\bar{d}\,\right]\cup\{\infty\} \text{ for all } s \in S}$}.

For any $s\in S$, define the transition probability between two states $x_t$ and $x_{t+1}=x_t+1_s$ under delivery price vector $d_t$ as $P_s(d_t)$, where we require $P_s(d_t)\geq 0$ for all $(s,d_t)\in S\times D$. We impose that $\sum_{s\in S}P_s(d_t)<1$, such that the probability of the customer not choosing \emph{any} slot is defined as $P_0(d_t)=1-\sum_{s\in S} P_s(d_t)$. This requirement implies that transitions from $x_t$ to $x_{t+1}$ are only possible in the positive direction and by at most a unit step along one dimension. Such models are typical for order-taking processes (see \cite{ASDEMIRETAL2009,SUH2011,YANG2017,YANG2016}). For the purpose of the case study, we will assume that the customer choice model follows a multinomial logit model, like in \cite{DONGETAL2009,YANG2017,YANG2016}, i.e.\
\begin{equation}\label{eq:mnl}
P_s(d_t):= \frac{\exp(\beta_c+\beta_s+\beta_d d_{s,t})}{\sum_{k\in S}\exp(\beta_c+\beta_k+\beta_dd_{k,t})+1},
\end{equation}
for all $(s,d)\in S\times D$, where $\beta_c \in \Re$ denotes a constant offset, $\beta_s \in \Re$ represents a measure of the popularity for all delivery slots and $\beta_d < 0$ is a parameter for the price sensitivity. Note that the no-purchase utility is normalised to zero, i.e.\ for the no-purchase ``slot'' $s=0$, we have a no-delivery ``charge'' $d_{0,t}=0$, such that \mbox{$\beta_c+\beta_0+\beta_d d_{0,t} = \beta_c+\beta_0 = 0$} and hence, the $1$ in the denominator of \eqref{eq:mnl} arises from $\exp(\beta_c+\beta_0)=1$. Furthermore, note that the constant offset $\beta_c$ is not necessary, since it can be absorbed in the $\{\beta_s\}_{s\in S\cup \{0\}}$ parameters. However, $\beta_c$ is often kept in practice to normalise one of the $\{\beta_s\}_{s\in S\cup \{0\}}$ parameters to zero (see e.g.\ \cite{YANG2017}). Finally, we define an average revenue per order $r\in \Re$, an expected customer arrival rate (on the booking system) per time step $\lambda \in (0,1]$ and an approximate delivery cost function $C: \mathbb{Z}^n \to \Re\cup \infty$, which we assume is Lipschitz continuous. The role of infinite delivery costs is to indicate infeasible states. We construct a multi-stage optimal control problem of the following form:
\begin{align}\label{eq:multistage}
&\underset{\{d_t\in D\}_{t=1}^{\bar{t}}}{\max} -C(x_{\bar{t}+1})+\sum_{t\in T} \innerprod{x_{t+1}-x_t}{d_t+r},\nonumber \\
&\text{subject to } x_{t+1} = x_t + \xi_t, \text{ for all } t\in T,
\end{align}
where $\xi_t=1_s$ with probability $\lambda P_s(d_t)$ and $\xi_t=0$ with probability $1-\lambda\sum_{s\in S}P_s(d_t)$, if $x_t+1_s \in X$ for all $s\in S$. In the opposite case when $x_t+1_s \notin X$, we have $\xi_t=0$ for all $s\in S$, i.e.\ we do not allow orders that increase the state $x$ beyond the feasible set $X$. From an economic perspective, the objective value is the total expected operational contribution margin, i.e.\ revenue from sales and delivery charges minus delivery costs. For simplicity, we will refer to this objective as simply the \emph{expected profit} that we seek to maximise.

\subsection{Dynamic programming formulation}
The above multi-stage optimal control problem is stage-wise independent, which makes it possible to derive the following DP recursion, analogously to \cite{YANG2017}, by introducing the value function $V_t: \mathbb{Z}^n \to \Re \cup -\infty$, which represents the expected profit-to-go for any state-time pair $(x,t)\in X\times T$ as follows:
\begin{align}\label{eq:dp}
V_t(x) :=&\; \underset{d\in D}{\max}\left\{\lambda\sum_{s\in S}P_s(d)\left(r+d_s+V_{t+1}(x+1_s)\right)\right.\nonumber\\
&\;\left.+\left(1-\lambda\sum_{s\in S}P_s(d)\right)V_{t+1}(x)\right\}\nonumber\\
&\;\forall (x,t)\in X\times T,\text{ where } \nonumber\\
&\; V_{\bar{t}+1}(x)=-C(x) \quad \forall x\in X.
\end{align}
We assume that $V_t(x)=-\infty$ for all infeasible states $x\notin X$. Notice that we have dropped subscripts $t$ for $x$ and $d$ to simplify notation and since the time step is evident from the value function. Furthermore, we adopt the convention that when for any $s\in S$, it happens that $d_s=\infty$ and $V_{t+1}(x+1_s)=-\infty$, we have $P_s(d)(r+d_s+V_{t+1}(x+1_s))=0$. This corresponds to the additional profit of accepting an unavailable slot, which is mathematically undefined in \eqref{eq:dp}, but practically it is zero. To represent the DP in a more compact form, we define the Bellman operator $\mathcal{T}$ through the relationship
\begin{equation}\label{eq:dpcompact}
V_t = \mathcal{T}V_{t+1}, \text{ for all } t\in T.
\end{equation}
Since for realistic problem instances, the above DP formulation in \eqref{eq:dp} cannot be solved by direct computation of the exact value function due to a prohibitively large number of states, one needs to approximate the value function of the DP. However, given an approximate value function, finding optimal prices is relatively easy. For example, for the multinomial logit model, \cite{DONGETAL2009} determines approximately optimal prices using a simple Newton root search.

\section{General, sample-based, iterative approximation algorithm}\label{sec:gen_approxalg}

As mentioned in the previous section, a key to solving the revenue management problem in attended home delivery is to approximate the value function of the DP in \eqref{eq:dp} effectively. A popular strategy, not only for this problem (see \cite{KOCH2020,YANG2017}), but also for other stochastic multi-stage problems (see \cite{PEREIRA1991,SHAPIRO2011}), is to use a sample-based approach and to refine the value function along states that are likely to be visited under the approximately optimal decision policy. Approximations can then be improved by iterating between generating samples and refining the value function along the obtained sample paths.

We now state a general, sample-based, iterative approximate DP procedure in Algorithm \ref{alg:gen_approx} below. The three algorithms that we investigate in this paper are special cases of this general algorithm and they differ only in step 11 of Algorithm \ref{alg:gen_approx}. We detail how this step is computed for each of the individual specific algorithms further below.

\begin{algorithm}[h]
\setstretch{\mystretch}
\caption{General sample-based, iterative approximation algorithm}\label{alg:gen_approx}
\begin{algorithmic}[1]
\State Initialise parameters: $X, D, P_s, T, r, \lambda, C$ and $i_{\max}$
\State Initialise $Q_t^{0}(x) \gets (\bar{d}+r)\innerprod{\mathbf{1}}{\bar{x}-x}-C(\bar{x})$, for all $(x,t)\in X\times T$
\State Initialise $Q_{\bar{t}+1}^0(x) \gets -C(x)$, for all $x \in X$
\For{$i\in \{1,2,\dots, i_{\max}\}$}
	\State $x_1^i \gets \mathbf{0}$
	\For{$t\in T$} \Comment ``Forward sweep''
		\State $d_t^i \gets d^*(x_t^i)$, the solution of \eqref{eq:forward_prob}
		\State $x_{t+1}^i \gets x_{t}^i + \underset{x_{t+1}^i}{\text{sample}}\left\{P_{s}\left(d_t^i\right)\right\}$
	\EndFor
	%\State $l(i)\gets \sum_{t=1}^{\bar{t}} \innerprod{r+d_t^i}{x_{t+1}^i-x_t^i} - C(x_{\bar{t}+1})$
	\While{$t > 1$} \Comment ``Backward sweep''
  		\State $Q_t^{i}\gets $ update $Q_t^{i-1}$
  		\State $t \gets t-1 $ 
	\EndWhile
%	\ State $u(i)\gets Q_1^{i}(\mathbf{0})$ 
\EndFor
\end{algorithmic}
\end{algorithm}

We first initialise all parameters of the DP in \eqref{eq:dp} (step 1). Denote the maximum number of iterations by $i_{\max} \in \mathbb{N}$ and let $I:=\{0,1,\dots,i_{\max}\}$. Let the value function approximation be denoted by $Q_t^{i}$ for all $(i,t)\in I\times T$. We could initialise $Q_t^0$ to any value as long as that does not violate any assumptions on the approximation algorithm used, as discussed further below. However, one effective way to satisfy the assumptions of all three algorithms considered and to speed up computation, is to initialise $Q_{t}^0$ for all $t\in T$ using the unique fixed point of DP, $V^*$ (step 2). In \cite{LEBEDEVETAL2019A}, it is shown that the fixed point is given analytically (under mild technical assumptions) as
\begin{equation}\label{eq:fixedpoint}
V^*(x):= (\bar{d}+r)\innerprod{\mathbf{1}}{\bar{x}-x}-C(\bar{x}),\text{ for all } x\in X.
\end{equation}
Note that $V^*$ is an upper bound to $V_t$ for all $t\in T$, since $\mathcal{T}$ is a monotone operator (see \cite[Chapter 3]{BERTSEKAS2012}). Furthermore, notice that the fixed point is affine in $x$ and that the components of the gradient are given by $\bar{d}+r$. Using the Bellman equation in \eqref{eq:dp}, we can use this gradient to compute the optimal delivery slot prices at the fixed point. This leads to the optimal delivery charge being $\bar{d}$ for all feasible time slots. The intuition behind this is that the fixed point corresponds to the limit of the value function as $t$ tends to $-\infty$. Hence, going backwards infinitely many time steps, the probability of selling out the entire delivery capacity across all delivery slots tends to one for all prices $d\in D$. The profit-maximising behaviour in this hypothetical scenario would then be to charge customers the maximum admissible delivery charge $\bar{d}$ for all delivery time slots. Finally, we also initialise $Q_{\bar{t}+1}^i(x) := V_{\bar{t}+1}(x) = - C(x)$ for all $(x,i)\in X\times I$ (step 3).

For the next steps, fix any iteration $i\in I\setminus\{0\}$. In each ``forward sweep'', we solve an approximate version of the Bellman equation in \eqref{eq:dp} forward in time, i.e.\ by replacing $V_t$ with its approximation $Q_t^{i-1}$ (step 7), i.e.\
\begin{align}\label{eq:forward_prob}
d^*(x_t^i) :=&\; \underset{d\in D}{\mathrm{argmax}}\left\{\lambda\sum_{s\in S}P_s(d)\left(r+d_s+Q_{t+1}^{i-1}(x_t^i+1_s)\right)\right.\nonumber\\
&\;\left.+\left(1-\lambda\sum_{s\in S}P_s(d)\right)Q_{t+1}^{i-1}(x_t^i)\right\}.
\end{align}
Notice that \cite[Theorem 1]{DONGETAL2009} have shown that for the multinomial choice model, the maximisers are unique for the above expression, hence we use equality in \eqref{eq:forward_prob}. By \eqref{eq:forward_prob}, we compute a suboptimal $d_t^i$ for all $t \in T$ and simulate state transitions by sampling from the transition probability distribution given the  approximately optimal decisions (step 8). This defines a sample path $x_{t}^i $ for all $t \in T\cup \{\bar{t}+1\}$. %At the end of each ``forward sweep'', we compute a stochastic lower bound on the total expected profit $V_1(\mathbf{0})$, which we denote by $l(i)$ for all $i \in I\setminus\{0\}$ (step 10). We show that this is indeed a stochastic lower bound in Section \ref{sec:valres}.

In each ``backward sweep'', we update the value function approximation using the particular mechanism of the chosen algorithm (step 11).  These two sequences -- ``forward sweep'' and ``backward sweep'' -- are repeated for $i_{\max}$ iterations. In the next sections, we describe the exact mechanisms of the three algorithms considered in this paper, making step 11 in Algorithm \ref{alg:gen_approx} explicit.

\subsection{Affine value function approximation update}\label{sec:avfa_update}
This approach is proposed in \cite{YANG2017}. The idea is to approximate the value function by an affine function of the form
\begin{equation}\label{eq:avfa}
Q_t^i(x) := \gamma_0^i +(\bar{t}+1-t)\theta^i - \sum_s \gamma_s^i x_s,
\end{equation}
for all $(x,i,t)\in X\times I\times T$ and where $\gamma_s^i$, for all $s\in S\cup \{0\}$ and $\theta^i$ are scalar, real-valued parameters, for all $i\in I$. Then, the updating rule in step 11 of Algorithm \ref{alg:gen_approx} is a gradient descent step to minimise $\left(Q_t^{i-1}(x_{t+1}^i)-\mathcal{T}Q_{t+1}^{i-1}(x_{t+1}^i)\right)^2$, which thus becomes
\begin{align}
\gamma_0^i =&\; \gamma_0^{i-1} -\alpha_1 \left(Q_t^{i-1}(x_{t+1}^i)-\mathcal{T}Q_{t+1}^{i-1}(x_{t+1}^i)\right)\nonumber\\
\gamma_s^i =&\; \gamma_s^{i-1} -\alpha_2 \left(Q_t^{i-1}(x_{t+1}^i)-\mathcal{T}Q_{t+1}^{i-1}(x_{t+1}^i)\right)x_{s,t+1}^i,\nonumber\\
&\;\text{for all } s\in S\nonumber\\
\theta^i =&\; \theta^{i-1} - \alpha_3
\left(Q_t^{i-1}(x_{t+1}^i)-\mathcal{T}Q_{t+1}^{i-1}(x_{t+1}^i)\right)(\bar{t}+1-t)\label{eq:avfa_update},
\end{align}
where $\alpha_1$, $\alpha_2$ and $\alpha_3$ are (positive) step sizes, which are chosen to be sufficiently small for convergence of the above iterative procedure (see e.g. \cite[Lemma 8.2]{BECK2017}).

One important observation from a control perspective is that a value function approximation that is affine in $x$ implies that the pricing control will have \emph{no state feedback} for all states $x$ such that $x+1_s\in X$ for all $s\in S$. To see this, notice that \eqref{eq:forward_prob} can be re-written in terms of differences $Q_{t+1}^{i-1}(x_t^i)-Q_{t+1}^{i-1}(x_t^i+1_s)$ for all $s\in S$ as follows:

\begin{align}\label{eq:forward_prob2}
d^*(x_t^i) =&\; \underset{d\in D}{\mathrm{argmax}}\left\{\lambda\sum_{s\in S}P_s(d)\left(r+d_s\right.\right.\nonumber\\
&\left.\vphantom{\sum_s}\left. +\;Q_{t+1}^{i-1}(x_t^i+1_s)-Q_{t+1}^{i-1}(x_t^i)\right)+Q_{t+1}^{i-1}(x_t^i)\right\} \nonumber\\
=&\;\underset{d\in D}{\mathrm{argmax}}\left\{\lambda\sum_{s\in S}P_s(d)\left(r+d_s-\gamma_s^{i-1}\right)+Q_{t+1}^{i-1}(x_t^i)\right\} \nonumber\\
=&\;\underset{d\in D}{\mathrm{argmax}}\left\{\lambda\sum_{s\in S}P_s(d)\left(r+d_s-\gamma_s^{i-1}\right)\right\},
\end{align}
for all $x\in X$, such that $x+1_s \in X$ for all $s\in S$, and where we have first substituted for $Q_{t+1}^{i-1}$ from \eqref{eq:avfa} and then cancelled the term that is independent of $d$ since it is irrelevant for the argmax operator. Hence, the approximately optimal pricing policy does not depends on the state $x_t^i$ for all $(x,t)\in X\times T$ such that $x+1_s\in X$ for all $s\in S$. This ultimately means that the affine value function approximation generates a \emph{feedforward} pricing policy, which is incapable of adjusting prices based on changes in the vector of orders. This insight also provides theoretical support for the suggestions of \cite{KOCH2020} and \cite{YANG2017} to explore non-linear value function approximations: The preceding discussion shows that allowing the value function to be non-linear makes it possible to include state feedback in the pricing policy.

\subsection{Non-linear stochastic dual dynamic programming update}
In contrast to the affine value function update above, the non-linear stochastic dual  DP update generates non-linear value function approximations, which by the above discussion make it possible to include state feedback in the pricing policy. Similarly to \cite{ZHANG2019} and \cite{ZOU2019}, this update is computed in step 11 of Algorithm \ref{alg:gen_approx} as
\begin{equation}
Q_{t}^i \gets \min\{H^*,Q_t^{i-1}\},
\end{equation} 
where the minimum is taken pointwise and the so-called Lagrange dual cut $H^*$ is defined as
\begin{subequations}\label{eq:lagrangecut}
\begin{align}
& H^*(x):= v^*-\innerprod{\mu^*}{x_{t+1}^i - x}, \text{ for all } x\in X\\
&\text{and where } \nonumber\\
& v^*:=\underset{\mu\in \mathcal{M}}{\min} \; \underset{d\in D,z\in X}{\max}\left\{\lambda\sum_{s\in S}P_s(d)\left(r+d_s+Q_{t+1}^{i-1}(z+1_s)\right)\right.\nonumber\\
& \hphantom{v^*:=}\;\left.+\left(1-\lambda\sum_{s\in S}P_s(d)\right)Q_{t+1}^{i-1}(z)+\innerprod{\mu}{x_{t+1}^i-z}\right\}\label{eq:vstar}
\end{align}
\end{subequations}
and where $\mu^*$ is the minimiser of \eqref{eq:vstar}. We now formally establish that the Lagrangian cut from \eqref{eq:lagrangecut} generates an upper bound on the exact value function. Notice that this means that the entire approximate value function $Q_t^ i$ is an upper bound on the exact value function $V_t$ if we additionally assume that the initialiser $Q_t^0$ is an upper bound. Using the unique fixed point of the DP for $Q_t^0$ at the initialising step, as discussed at the beginning of Section \ref{sec:gen_approxalg}, satisfies this assumption.
\begin{prop}\label{pr:lagrange}
Fix any $(i,t)\in (I\setminus\{0\})\times T$. The Lagrangian cut $H^*$, generated at a sample $x_{t+1}^{i}$, produces an upper bound on the exact value function, i.e.\
\begin{equation}
H^*(x) \geq V_t(x), \text{ for all } x\in X.
\end{equation}
\end{prop}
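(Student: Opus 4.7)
The plan is to combine a standard weak-duality argument for the inner min-max in \eqref{eq:vstar} with the monotonicity of the Bellman operator $\mathcal{T}$. Let me denote by $f(d,z) := \lambda\sum_{s\in S} P_s(d)\bigl(r+d_s+Q_{t+1}^{i-1}(z+1_s)\bigr) + \bigl(1-\lambda\sum_{s\in S}P_s(d)\bigr)Q_{t+1}^{i-1}(z)$ the bracketed expression in \eqref{eq:vstar} (without the dualization term), so that $\mathcal{T}Q_{t+1}^{i-1}(x) = \max_{d\in D} f(d,x)$ by definition of the Bellman operator in \eqref{eq:dp}.

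First, I fix an arbitrary $x\in X$ and use the fact that $\mu^*$ attains the outer minimum in \eqref{eq:vstar}, so $v^* = \max_{d\in D,\, z\in X}\bigl\{f(d,z) + \langle \mu^*, x_{t+1}^i - z\rangle\bigr\}$. Weak duality (or simply specializing the $z$ variable) then gives the lower bound obtained by evaluating the maximand at $z=x$:
\begin{equation*}
v^* \;\geq\; \max_{d\in D} f(d,x) + \langle \mu^*, x_{t+1}^i - x \rangle \;=\; \mathcal{T}Q_{t+1}^{i-1}(x) + \langle \mu^*, x_{t+1}^i - x \rangle.
\end{equation*}
Rearranging this inequality already puts the cut in the right form: $H^*(x) = v^* - \langle \mu^*, x_{t+1}^i - x \rangle \geq \mathcal{T}Q_{t+1}^{i-1}(x)$.

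Second, I would invoke the standing upper-bound property $Q_{t+1}^{i-1}(y) \geq V_{t+1}(y)$ for all $y\in X$. This is precisely the inductive invariant described in the paragraph immediately preceding the proposition statement: it holds for $i=1$ because $Q_{t+1}^0 = V^*$ is an upper bound on $V_{t+1}$ (by \eqref{eq:fixedpoint} and monotonicity of $\mathcal{T}$), and it is preserved by the update $Q_t^i = \min\{H^*, Q_t^{i-1}\}$ as soon as $H^*$ is itself an upper bound. Given $Q_{t+1}^{i-1} \geq V_{t+1}$ pointwise, monotonicity of $\mathcal{T}$ \cite[Chapter~3]{BERTSEKAS2012} yields $\mathcal{T}Q_{t+1}^{i-1}(x) \geq \mathcal{T}V_{t+1}(x) = V_t(x)$. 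Chaining this with the bound from the first step gives $H^*(x) \geq V_t(x)$, as required.

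The argument is short, and the single conceptual obstacle is being clean about the inductive invariant: the proposition is stated for one stage in isolation, but the chain $H^*(x) \geq \mathcal{T}Q_{t+1}^{i-1}(x) \geq V_t(x)$ only closes once we know the approximation at stage $t+1$ already dominates $V_{t+1}$. I would therefore either make that a brief standalone lemma preceding the proposition, or open the proof with one sentence unpacking why the initialization $Q^0 = V^*$ and the pointwise-minimum update together guarantee the invariant. Everything else is a direct specialization of the dual variable $z$ and one application of monotonicity.
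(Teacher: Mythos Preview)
Your proposal is correct and follows essentially the same approach as the paper: both arguments specialize $z=x$ in the inner maximization to obtain the dual bound, invoke the inductive invariant $Q_{t+1}^{i-1}\ge V_{t+1}$ with the base case $Q_t^0=V^*$, and use monotonicity of $\mathcal{T}$. The only cosmetic difference is ordering---the paper first bounds $V_t(x)$ with $V_{t+1}$ inside the max, then replaces $V_{t+1}$ by $Q_{t+1}^{i-1}$, then minimizes over $\mu$, whereas you first establish $H^*(x)\ge \mathcal{T}Q_{t+1}^{i-1}(x)$ directly and then apply monotonicity; the ingredients are identical.
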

\begin{proof}
 Fix any $x_0 \in X$. Note that in the base case at $i=0$, we have $Q_t^i(x)\geq V_t(x)$ for all $(x,t)\in X\times T$, since $Q_t^0$ is initialised at the fixed point $V^*$, which is an upper bound to $V_t$. Suppose by means of an induction hypothesis that for some $(i,t)\in (I\setminus\{0\})\times T$, $Q_{t+1}^{i-1}(x)\geq V_{t+1}(x)$ for all $x\in X$. Fix any $x\in X$ and any compact set $\mathcal{M}\subset\Re^n$. Then we can upper bound the right-hand-side of the Bellman equation in \eqref{eq:dp} by introducing an auxiliary variable $z$ and by replacing $V_{t+1}(y)$ by $Q_{t+1}^{i-1}(y)$ for all $y\in \{z+1_s\}_{S\cup\{0\}}$, which yields
\begin{align}
V_t(x)\leq&\;\underset{d\in D,z\in X}{\max}\left\{\lambda\sum_{s\in S}P_s(d)\left(r+d_s+V_{t+1}(z+1_s)\right)\right.\nonumber\\
&\;\left.+\left(1-\lambda\sum_{s\in S}P_s(d)\right)V_{t+1}(z)+\innerprod{\mu}{x_{t+1}^i-z}\right\} \nonumber \\
&\;- \innerprod{\mu}{x_{t+1}^i-x}\nonumber \\
\leq&\;\underset{d\in D,z\in X}{\max}\left\{\lambda\sum_{s\in S}P_s(d)\left(r+d_s+Q_{t+1}^{i-1}(z+1_s)\right)\right.\nonumber\\
&\;\left.+\left(1-\lambda\sum_{s\in S}P_s(d)\right)Q_{t+1}^{i-1}(z)+\innerprod{\mu}{x_{t+1}^i-z}\right\} \nonumber \\
&\;- \innerprod{\mu}{x_{t+1}^i-x}\label{eq:nlsddpcut},
\end{align}
for all $\mu \in \mathcal{M}$. Note that the first inequality holds since we maximise over $z \in X$, but choosing $z=x \in X$, results in equality. To see this, notice that setting $z=x$ yields the original Bellman equation in \eqref{eq:dp} because the added and subtracted $\innerprod{\mu}{x_{t+1}^i-z}$ terms cancel. The second inequality in \eqref{eq:nlsddpcut} holds due to the induction hypothesis that $Q_{t+1}^{i-1}(x)\geq V_{t+1}(x)$ for all $x\in X$. Since the above expression holds for all $\mu \in \mathcal{M}$, we can minimise over $\mu\in \mathcal{M}$ to arrive at
\begin{align}
V_t(x)\leq&\; \underset{\mu\in \mathcal{M}}{\min} \; \underset{d\in D,z\in X}{\max}\left\{\lambda\sum_{s\in S}P_s(d)\left(r+d_s+Q_{t+1}^{i-1}(z+1_s)\right)\right.\nonumber\\
&\;\left.+\left(1-\lambda\sum_{s\in S}P_s(d)\right)Q_{t+1}^{i-1}(z)+\innerprod{\mu}{x_{t+1}^i-z}\right\} \nonumber\\
&\;- \innerprod{\mu^*}{x_{t+1}^i-x}\nonumber\\
& = v^*- \innerprod{\mu^*}{x_{t+1}^i-x} = H^*(x)\label{eq:hstar}
\end{align}
where $\mu^*\in \mathcal{M}$ denotes the minimiser of the maximised expression in curly brackets above, and where $v^*$ as well as $H^*$ are found from \eqref{eq:lagrangecut}, as required.
\end{proof}

The proof of this proposition is based on a similar result in \cite[Proposition 5]{ZHANG2019}, where an additional regularisation term is included in the approximate version of the forward problem in \eqref{eq:forward_prob}. The regularisation is needed for general multi-stage non-linear stochastic problems to guarantee that the approximate value function is Lipschitz continuous. For the specific revenue management problem under study, \cite{LEBEDEVETAL2019A} have shown that the DP can be re-written as a so-called stochastic shortest path problem (see \cite[Chapter 3]{BERTSEKAS2012}), where the Bellman operator $\mathcal{T}$ is known to be monotonic. Since the terminal condition of the DP and the fixed point are Lipschitz continuous functions, this implies that the value function is Lipschitz continuous for all time steps $t\in T\cup\{\bar{t}+1\}$ and that we do not need regularisation in the cut definition. Hence, our proof shows that the regularisation term can be dropped for the problem under study in this paper, which simplifies the computation of the Lagrange dual cut $H^*$.

From a control perspective, the benefit of having a non-linear value function approximation -- in comparison with the affine value function approximation from Section \ref{sec:gen_approxalg}-A -- comes at a different cost: The problem of finding the optimal cut coefficients $\mu$ in \eqref{eq:vstar} is a non-convex optimisation problem and there are consequently no guarantees that it can be solved to global optimality. For the particular form of the problem in this paper, we can find the cut coefficients from a reformulation of the problem, which results in a bi-concave objective function, which can be exploited to solve this problem as outlined in Appendix \ref{ap:biconcave_reform}. Nevertheless, since global optimality is required to ensure that the approximate value function constitutes an upper bound on the exact value function in accordance with Proposition \ref{pr:lagrange}, we cannot guarantee that a cut is indeed an upper bound on the exact value function. We illustrate how this may result in computational problems in Section \ref{sec:casestudy}.

\subsection{Gradient-bounded dynamic programming update}
Gradient-bounded DP was introduced in \cite{LEBEDEVETAL2020A} for the specific application of revenue management in attended home delivery. This manifests itself in the assumptions on the types of value function that the algorithm can approximate. In short, the exact value function of the DP needs to satisfy two assumptions: 

First, the value function needs to be \emph{concave extensible}. This means that its concave closure $\tilde{V}_t:\Re^n\to \Re$, defined as the smallest concave upper bound on the exact value function, coincides with the exact value function for all state-time pairs, i.e.\ $\tilde{V}_t(x)=V_t(x)$ for all $(x,t)\in X\times (T\cup\{\bar{t}+1\})$. Second, the exact value function needs to be \emph{submodular}. This is satisfied if and only if
\begin{equation}\label{eq:submod_case}
V_t(\max(y_1,y_2))+V_t(\min(y_1,y_2)) \leq V_t(y_1)+V_t(y_2),
\end{equation}
for all $(y_1,y_2,t)\in X\times X\times (T\cup\{\bar{t}+1\})$.

These two assumptions result in a particular segmentation of the convex hull of $V_t$: For any $t\in T$, construct the unique hyperplane $H$ through the set of pairs $(y,V_t(y))$ for all $y\in Y_+(x_{t+1}^i):=\{x_{t+1}^i+1_s\}_{s\in (S\cup\{0\})}$. Then $H$ is a separating hyperplane, i.e.\ $H(x)\geq V_t(x)$ for all $x\in X$, where the inequality holds with equality for all $y\in Y_+(x)$. The gradient-bounded DP algorithm exploits this property. We refer the interested reader to \cite{LEBEDEVETAL2020A} for details on the above-mentioned assumptions and to \cite{LEBEDEVETAL2019B} for proofs that these assumptions hold for the revenue management problem under study. 

For gradient-bounded DP, let the value function approximation $Q_t^{i}$ for all $(i,t)\in I\times T$ be the pointwise minimum of a finite number of affine functions, i.e.
\begin{equation}\label{eq:qminh}
Q_t^{i}(x):=\underset{j \in \{0,1,\dots,i\}}{\min}H_{t}^j(x), \text{ for all } x\in X,
\end{equation}
where $H_{t}^j: X \mapsto \Re$ describes a hyperplane, i.e.\ 
\begin{equation}
H_{t}^j(x):=\innerprod{a_{t}^j}{x} + b_{t}^j, \text{ for all } x\in X,
\end{equation}
with $a_{t}^j \in \Re^{n}, b_{t}^j \in \Re$ for all $(t,j)\in T\times I$. Furthermore, this approximation is an upper bound on the exact value function, i.e.\ $Q_t^i(x)\geq V_t(x)$ for all $(x,t,i)\in X\times T\times I$. To this end, it is important to initialise $Q_{t}^0$ for all $t\in T$ at an upper bound. The gradient-bounded DP update then ensures that the approximate value functions remain upper bounds to the exact value function for all iterations $i\in I$ as shown in \cite[Proposition 1]{LEBEDEVETAL2020A}. In step 11 of Algorithm \ref{alg:gen_approx}, gradient-bounded DP generates updates for the approximate value function as shown in Algorithm \ref{alg:gbdp_update} and explained further below.

\begin{algorithm}[H]
\setstretch{\mystretch}
\caption{Gradient-bounded dynamic programming update}\label{alg:gbdp_update}
\begin{algorithmic}[1]
\State $Z(x_{t+1}^i)\gets\{x_{t+1}^i+1_s+1_s'\}_{\begin{subarray}{l}
		 s\in S\cup\{\mathbf{0}\},\\
		s'\in S\cup\{\mathbf{0}\}\end{subarray}}$
\If{$Q_{t+1}^{i-1}$ is submodular on $Z(x_{t+1}^i)$}
	\State $H^* \gets$ unique hyperplane through \hphantom{wwwwwwwww} 
	$\hphantom{wwwwwwn}\vphantom{sum^1}\left\{\left(y,(\mathcal{T}Q_{t+1}^{i-1})(y)\right)\right\}_{y \in Y_{+}(x_{t+1}^i)}$
\Else
	\State $j^* \in \underset{j\in J_{t+1}^{i-1}}{\text{argmin}}\left\{\left(\mathcal{T}H_{t+1}^{j-1}\right)\left(x_{t+1}^i\right)\right\}$	
	\State $H^* \gets \mathcal{T}H_{t+1}^{j^*-1}$
\EndIf
\State $Q_t^{i}\gets \min\left\{H^*,Q_t^{i-1}\right\}$
\end{algorithmic}
\end{algorithm}

Fix any iteration $i\in I$. We first check if $Q_{t+1}^{i-1}$ is submodular (see \eqref{eq:submod_case}) on the set $Z(x_{t+1}^i):=\{x_{t+1}^i+1_s+1_{s'}\}$, for all $(s,s')\in (S\cup \{0\})\times (S\cup\{0\})$, i.e\ if and only if
\begin{align}
0 \leq&\; Q_{t+1}^{i-1}(y_1)+Q_{t+1}^{i-1}(y_2)\nonumber\\
&\;-Q_{t+1}^{i-1}(\min\{y_1,y_2\})-Q_{t+1}^{i-1}(\max\{y_1,y_2\})\label{eq:submodular}
\end{align}
holds for all $(y_1,y_2)\in Z(x_{t+1}^i)\times Z(x_{t+1}^i)$ (steps 1 and 2). Note that this is not necessarily the case for the approximate value function, even if the exact value function is submodular. We then distinguish between two cases:

\emph{Case I:} If $Q_{t+1}^{i-1}$ is submodular on $Z(x_{t+1^i})$, we locally compute the exact DP stage problem on the set $Y_+(x_t+1)^{i-1}$, i.e.\ $\{\mathcal{T}Q_{t+1}^{i-1}(y)\}_{y \in Y_+(x_{t+1}^i)}$, to construct the hyperplane through $\left\{\left(y,(\mathcal{T}Q_{t+1}^{i-1})(y)\right)\right\}_{y \in Y_+(x_{t+1}^i)}$ (step 3).

\emph{Case II:} If $Q_{t+1}^{i-1}$ is not submodular on $Z(x_{t+1^i})$, we need to compute a submodular upper bound on $Q_{t+1}^{i-1}$, which is readily given by the hyperplanes from which $Q_{t+1}^{i-1}$ is constructed. Therefore, we select the hyperplane $H_{t+1}^{j^*-1}$ that minimises the value at the evaluation point $x_{t}^{i}$, i.e.\
\begin{align}
Q_t^{i}=&\; \min\left\{ \mathcal{T}H_{t+1}^{j^*-1},Q_t^{i-1}\right\}\nonumber, \text{ where }\\
&\; j^* \in \underset{j\in J_{t+1}^{i-1}}{\text{argmin}}\left\{\left(\mathcal{T}H_{t+1}^{j-1}\right)\left(x_{t+1}^i\right)\right\}
\end{align}
and where $J_{t+1}^{i-1}$ is the set of supporting hyperplanes, i.e.\
\begin{equation}
J_{t+1}^{i-1}(x):=\underset{j \in \{0,1,\dots,i-1\}}{\text{argmin}}H_{t+1}^j(x),
\end{equation}
for all $(i,t,x)\in I\times T\times X$ (steps 5 and 6). Therefore, this creates the locally tightest upper bound. Finally, we take the pointwise minimum of the approximate value function at the previous iteration $Q_t^{i-1}$ and the newly created hyperplane $H^*$ to obtain the new value function approximation (step 8).

Similarly to the non-linear stochastic dual DP update from Section \ref{sec:gen_approxalg}-B and in contrast to the affine value function approximation update from Section \ref{sec:gen_approxalg}-A, the approximate value function generated by the gradient-bounded DP update is non-linear in $x$ as it is given by the pointwise minimum of affine functions in $x$. Assuming that the initialiser is an upper bound on the exact value function, which can be satisfied if we choose the fixed point for this purpose (as discussed at the beginning of Section \ref{sec:gen_approxalg}), the approximate value function is an upper bound to th exact value function as shown in \cite[Proposition 1]{LEBEDEVETAL2020A}. Finally, the advantage of gradient-bounded DP over non-linear stochastic dual DP is that only convex optimisation problems need to be solved to compute the update, which makes gradient-bounded DP more resilient against computational stability problems than non-linear stochastic dual DP as shown in Section \ref{sec:casestudy}.

\section{Profit-generation performance criterion}\label{sec:bounds_case}
Since the profits that all three algorithms generate are random variables, we can quantify their performance with probabilistic guarantees by performing validation runs, i.e.\ by simulating customer decisions forward in time and pricing based on the most refined approximate value function. Let the profit that we obtain in each of $k_{\max}$ validation runs be $l_\mathrm{v}(k)$ for all $k\in K:=\{1,\dots,k_{\max}\}$. Let $[l_{-},l_{+}]$ denote the (finite) support of the distribution of $l_{\mathrm{v}}(k)$ for any $k\in K$. In our case, $l_+=V^*(0) = (\bar{d}+r)\innerprod{\mathbf{1}}{\bar{x}}-C(\bar{x})$, where we use the fixed point from \eqref{eq:fixedpoint} and $l_-=-C(0)$. We then compute the empirical mean $\bar{l}_{\mathrm{v}}$ and empirical standard error $\sigma_{\mathrm{v}}$ as 
\begin{subequations}
\begin{align}
\bar{l}_{\mathrm{v}} &:= k_{\max}^{-1}\sum_{k=1}^{k_{\max}} l_{\mathrm{v}}(k),\\
\sigma_{\mathrm{v}}  &:= \sqrt{(k_{\max}-1)^{-1}\sum_{k=1}^{k_{\max}} \left(l_{\mathrm{v}}(k)-\bar{l}_{\mathrm{v}}\right)^2}.
\end{align}
\end{subequations}
For \emph{any} of the three algorithms considered, we can then quantify the performance of a pricing policy using the maximum of the two following bounds, presented in \cite[Proposition 7]{LEBEDEVETAL2020B}, which state the expected profit which can be guaranteed with confidence $(1-\alpha) \in (0,1)$ after observing $k_{\max}$ validation samples. Recall that $\mathbb{E}$ denotes the expectation operator and that Pr$(\cdot)$ denotes the probability of its argument.
\begin{prop}\label{pr:ebounds}
Fix any significance level $\alpha\in (0,1)$. Then $\Pr(\mathbb{E}\bar{l}_{\mathrm{v}}\geq l^*) \geq 1-\alpha$, for all $l^* \in \{l_{\mathrm{B}},l_{\mathrm{D}}\}$, where:
\begin{subequations}\label{eq:bounds}
\begin{align}
l_{\mathrm{B}}&:=\bar{l}_{\mathrm{v}} - \sqrt{\frac{2\sigma_{\mathrm{v}}\ln(\frac{2}{\alpha})}{k_{\max}}}-\frac{7(l_{+}-l_{-})\ln(\frac{2}{\alpha})}{3(k_{\max}-1)} \text{ and }\\
l_{\mathrm{D}}^{\mathbb{E}}&:=\int_{l=0}^{\infty}1-\min\left\{1,F_K(l) + \sqrt{\frac{\ln(\frac{1}{\alpha})}{2k_{\max}}}\right\}\mathrm{d}l,
\end{align}
\end{subequations}
where $F_K$ denotes the empirical cumulative distribution function of $\{l_{\mathrm{v}}(k)\}_{k\in K}$, i.e.\ \mbox{$F_K(l):=k_{\max}^{-1}\sum_{k\in K}\mathbbm{1}(l_{\mathrm{v}}(k)\geq l)$}, where $\mathbbm{1}$ denotes the indicator function.
\end{prop}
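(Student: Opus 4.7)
The plan is to handle the two confidence lower bounds $l_{\mathrm{B}}$ and $l_{\mathrm{D}}^{\mathbb{E}}$ separately, since the proposition asserts each one independently with confidence $1-\alpha$, and each will follow from a single concentration inequality applied to the i.i.d.\ sample $\{l_{\mathrm{v}}(k)\}_{k\in K}$ of validation profits. These samples have common mean $\mathbb{E}l_{\mathrm{v}}(k) = \mathbb{E}\bar{l}_{\mathrm{v}}$ and support contained in $[l_-, l_+]$, and the observable quantities $\bar{l}_{\mathrm{v}}$, $\sigma_{\mathrm{v}}$ and $F_K$ are sample functionals that concentrate around their respective population counterparts.

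For $l_{\mathrm{B}}$, I would invoke directly the empirical Bernstein inequality of Maurer and Pontil, which states that for i.i.d.\ random variables bounded in $[a,b]$ with $n$ samples, empirical mean $\bar{X}$ and sample standard deviation $\hat{\sigma}$,
\begin{equation*}
\Pr\!\left(\mathbb{E}X \geq \bar{X} - \sqrt{\frac{2\hat{\sigma}^2\ln(2/\alpha)}{n}} - \frac{7(b-a)\ln(2/\alpha)}{3(n-1)}\right) \geq 1-\alpha.
\end{equation*}
Specialising $a = l_-$, $b = l_+$, $n = k_{\max}$, $\bar{X} = \bar{l}_{\mathrm{v}}$ and $\hat{\sigma} = \sigma_{\mathrm{v}}$ produces exactly $\mathbb{E}\bar{l}_{\mathrm{v}} \geq l_{\mathrm{B}}$ with confidence at least $1-\alpha$, which proves the first half.

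For $l_{\mathrm{D}}^{\mathbb{E}}$, the plan is to combine the one-sided Dvoretzky--Kiefer--Wolfowitz inequality with the layer-cake representation of the expectation. DKW guarantees that, with probability at least $1-\alpha$, the empirical tail $F_K$ deviates from its population counterpart $\Pr(l_{\mathrm{v}} \geq l)$ uniformly by at most $\sqrt{\ln(1/\alpha)/(2k_{\max})}$ on the relevant side. Combining this uniform concentration with the layer-cake identity $\mathbb{E}X = \int_0^{\infty}\Pr(X \geq l)\,\mathrm{d}l$ (valid after an additive shift that renders the profit non-negative, which can be absorbed back into the definition of $l_{\mathrm{D}}^{\mathbb{E}}$) and clipping the deviation-shifted tail at the trivial upper bound $1$ to keep the integrand non-negative produces
\begin{equation*}
\mathbb{E}\bar{l}_{\mathrm{v}} \geq \int_0^{\infty}\!\left[1-\min\!\left\{1, F_K(l) + \sqrt{\tfrac{\ln(1/\alpha)}{2k_{\max}}}\right\}\right]\mathrm{d}l = l_{\mathrm{D}}^{\mathbb{E}}
\end{equation*}
with the stated confidence.

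The main obstacle I anticipate is bookkeeping rather than any deep estimate: reconciling the convention of $F_K$ in the excerpt (defined via the indicator $l_{\mathrm{v}}(k) \geq l$, i.e.\ as an empirical survival function) with the CDF-oriented statement of DKW, handling the additive shift required to use the layer-cake formula on a non-negative random variable when $l_- < 0$ is admissible, and verifying that the $\min\{1,\cdot\}$ clipping is exactly what arises from applying the trivial probability bound. Once these conventions are pinned down, each bound becomes a one-line consequence of its respective concentration inequality, and no union bound is required, since the proposition treats the two statements $l^* \in \{l_{\mathrm{B}}, l_{\mathrm{D}}\}$ independently.
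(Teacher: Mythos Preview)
Your proposal is correct and matches what the paper itself does, which is simply to cite the proof in \cite[Appendix~A.6]{LEBEDEVETAL2020B} rather than reproduce it; the subscript naming ($l_{\mathrm{B}}$ for the empirical-Bernstein bound of Maurer--Pontil, $l_{\mathrm{D}}$ for the DKW-based bound) confirms that the cited argument uses exactly the two concentration inequalities you identify. Your bookkeeping concern about non-negativity in the layer-cake step is also on point: the paper explicitly notes after the proposition that the bounds require $l_- = -C(0) = 0$ (via \cite[Assumption~5]{LEBEDEVETAL2020B}), which is precisely the shift you flagged.
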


The proof can be found in \cite[Appendix A.6]{LEBEDEVETAL2020B}. Strictly speaking, we assume non-positive fixed costs $C(0)$ for the bounds to hold (see \cite[Assumption 5]{LEBEDEVETAL2020B}). Hence, we set $l_- = -C(0)=0$ for our case study. However, since the fixed costs do not impact the pricing policy of any of the algorithms considered, these become irrelevant for our analysis. 

\section{Case Study}\label{sec:casestudy}
In the following three sections, we present the numerical analysis that compares the three value function approximation algorithms stated and analysed in Section \ref{sec:gen_approxalg}. To this end, we generate particular instances of the revenue management problem in attended home delivery presented in Section \ref{sec:problem}. We use the parameter values in \cite{YANG2017} as a base case and modify these parameters to simulate various scenarios and conduct a sensitivity analysis. In Section V-\ref{sec:case_exact}, we analyse the performance of the three algorithms under the assumption that the model parameters are known accurately. Then, we simulate how well the algorithms perform when they are trained on the data in V-\ref{sec:case_exact}, but being tested on scenarios, where the expected demand (Section V-B 1) or customer choice parameters (Section V-B 2) differ from the model.

\subsection{Exact model analysis}\label{sec:case_exact}
In this section, we adapt the numerical case study parameters from \cite{YANG2017} to arrive at the set-up defined in Table \ref{tab:case_exact_par} below.
\begin{table}[b]
\caption{Exact model parameters.}
\begin{center}
\renewcommand{\arraystretch}{\mystretch}
\begin{tabular}{r|l}\label{tab:case_exact_par}
	$S$ & $\{1,2,\dots,17\}$ \\
	$\lambda$ & $0.8$ \\
	$\left[\underline{d},\bar{d}\,\right]$ & $[\textrm{\pounds}0,\textrm{\pounds}10]$ \\
	$\beta_c,\beta_d$ & $-2.5087,-0.0766$\\
	$\{\beta_s\}_{s\in S}$ & $\{-1.0305, -0.3591, 0.3107, 0.5922, 0.6154,$\\
	&$0.0796, 0.5356, -0.2415,-0.6286, -1.6736,$\\
	&$-0.4351, -0.161, 0, 0.2533, 0.0736, 0.562,$\\
	&$0.2346\}$\\
	$r$ & \pounds$34.53$
\end{tabular}
\end{center}
\end{table}
We also use the same step sizes for the affine value function update (see \eqref{eq:avfa_update} in Section \ref{sec:avfa_update}) as in \cite{YANG2017}, i.e.\ $\alpha_1:=0.0001, \alpha_2:=0.00025, \alpha_3:=0.00014$. In addition to these fixed parameters, we consider two more parameters, which we will vary as described further below.

First, we vary the delivery capacity of each time slot by varying the size of the delivery sub-area under consideration to simulate an urban, suburban and rural scenario. Each scenario has a different value of capacity per delivery time slot $\bar{x}$, which influences the variable delivery cost. In practice, the mapping between the characteristics of the delivery-subarea and the delivery capacity for all delivery time slots may depend on a lot of factors like infrastructure, traffic and weather conditions, however for the purpose of our case study, we use a simplified model from \cite{DAZANGO1987,YANG2017}, which derives the delivery slot capacity as follows: Suppose that the delivery sub-area is rectangular and has length $L$ and width $W$ as shown in Fig.\ \ref{fig:subarea}. 

\begin{figure}[H]
\centering
\includegraphics[width=0.4\linewidth]{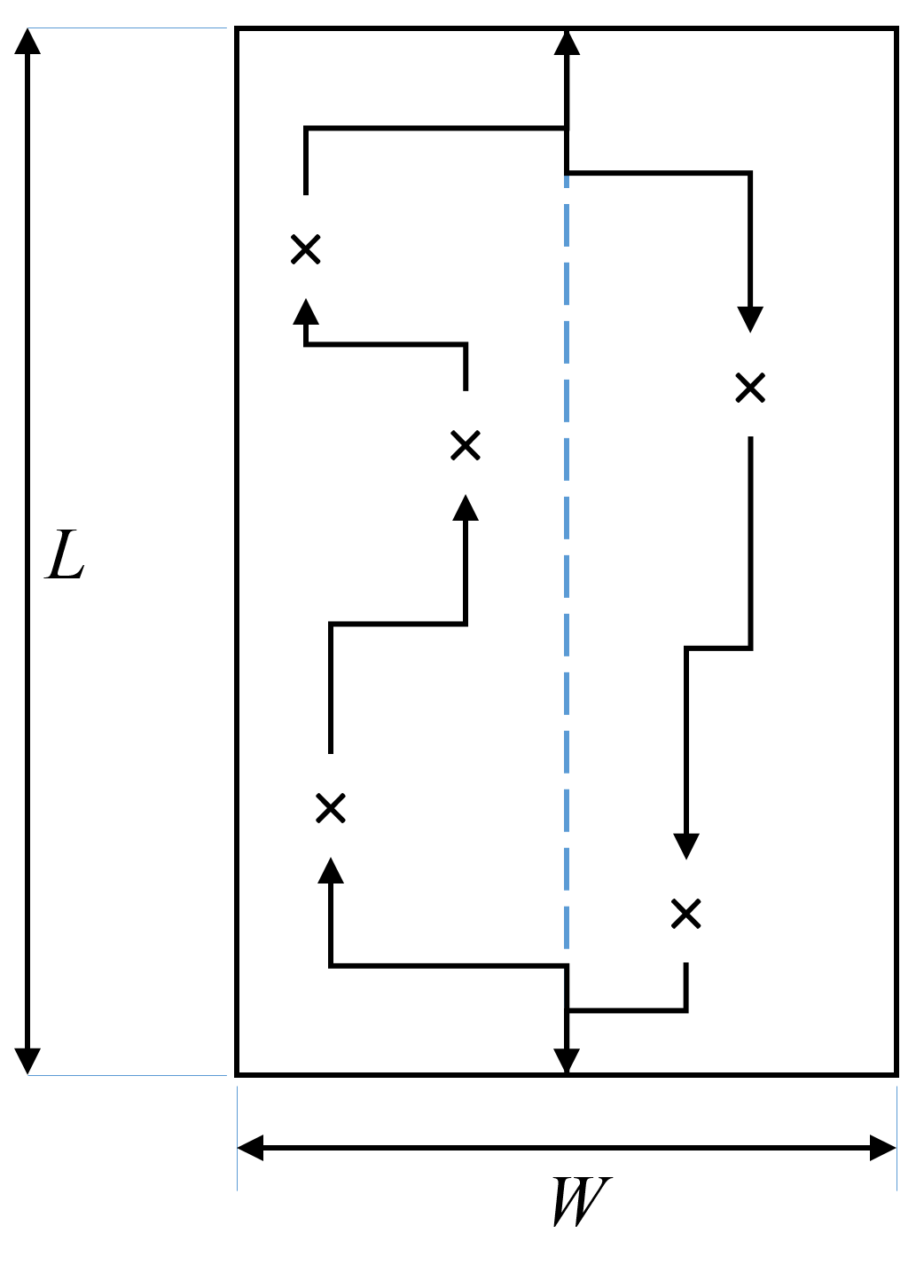}
\caption{Delivery subarea schematic with length $L$ and width $W$. Crosses indicate customer locations served in a particular delivery time slot.}
\label{fig:subarea}
\end{figure}

We further suppose an average delivery truck velocity of $\omega=25$ mph and a cost per mile of $\xi=\pounds0.25$. We assume that in each delivery time slot, the truck travels back and forth along the length $L$ of the delivery sub-area; along the half-width $[0,W/2]$ of the delivery sub-area in one direction and along the other half-width $[W/2,W]$ in the other direction. We then assume that customer locations are random, uniformly distributed in the delivery sub-area and that the truck travels Manhattan distances. This implies that the average distance travelled between two customers along the axis aligned with the width of the sub-area is $1/3$ times the half-width $W/2$. This results in a variable delivery cost of
\begin{equation}
c_{\mathrm{var}}:= \xi \times W/6,
\end{equation}
as shown in \cite{DAZANGO1987}. We find $W$ from the condition that, in any delivery slot, the delivery truck must be able to make $\bar{x}$ deliveries and an additional assumption that $L=2W$. The last choice is arbitrary and our results do not change qualitatively for other ratios between $L$ and $W$. The total travelling distance in every delivery time slot thus becomes
\begin{align}
\omega \times 1\mathrm{h} &= 2L + \bar{x}\frac{W}{6}\nonumber \\
\Rightarrow W &= \frac{\omega}{4+\bar{x}/6}.
\end{align}
This finally implies that 
\begin{equation}
c_{\mathrm{var}}:= \frac{\xi\omega}{24+\bar{x}}.
\end{equation}

Second, we vary the expected demand, i.e.\ the expected number of customer arrivals on the booking website. This quantity is given by $\lambda\bar{t}$. Since it is reasonable to keep $\lambda\approx0.8$ for customer choice parameter estimation purposes (see \cite{YANG2016}), we fix $\lambda=0.8$ and vary $\bar{t}$ to achieve a total demand level corresponding to $\phi n \bar{x}$, where $n\bar{x}$ is the total delivery capacity for all slots and $\phi\in \Re$ is a demand factor, such that $\phi \in \Phi:=\{1/8,1/4,1/2,1,2,4,8\}$. Hence, we find $\bar{t}$ as
\begin{equation}
\bar{t} \approx \frac{\phi n \bar{x}}{\lambda},
\end{equation} 
for all $\phi \in \Phi$ and where the approximation comes from rounding $\bar{t}$ to the nearest integer. For all these scenarios, we compute the profit that is reached in expectation with confidence 99\%, by computing 100 validation samples for each scenario and each algorithm and using the tighter of the two bounds from Section \ref{sec:bounds_case}.

In general, we observe that the non-linear stochastic dual DP algorithm produces higher expected profits than the affine value function approximation algorithm, while taking more time to compute a good solution. However, the gradient-bounded DP algorithm exhibits the strengths of both other algorithms: very similar profit generation performance to non-linear stochastic dual DP and similar speed to the affine value function approximation algorithm. For example, Fig.\ \ref{fig:Exp01_time} below shows the computation time that it takes for the three algorithms to reach at least 95\% of their maximum expected profit with 99\% confidence for various demand factors and delivery time slot capacities. Non-linear stochastic dual DP always takes longest to compute out of the three algorithms. Computation time also tends to increase for non-linear stochastic dual DP as demand factor or slot capacity increase. For capacity 20, it takes about 4 times longer to compute the solution for demand factor 8, compared with the other two algorithms. This time factor increases to about 10 as we decease the demand factor to $1/8$. 

\begin{figure}[b!]
\centering
\includegraphics[width=\mywidth\linewidth]{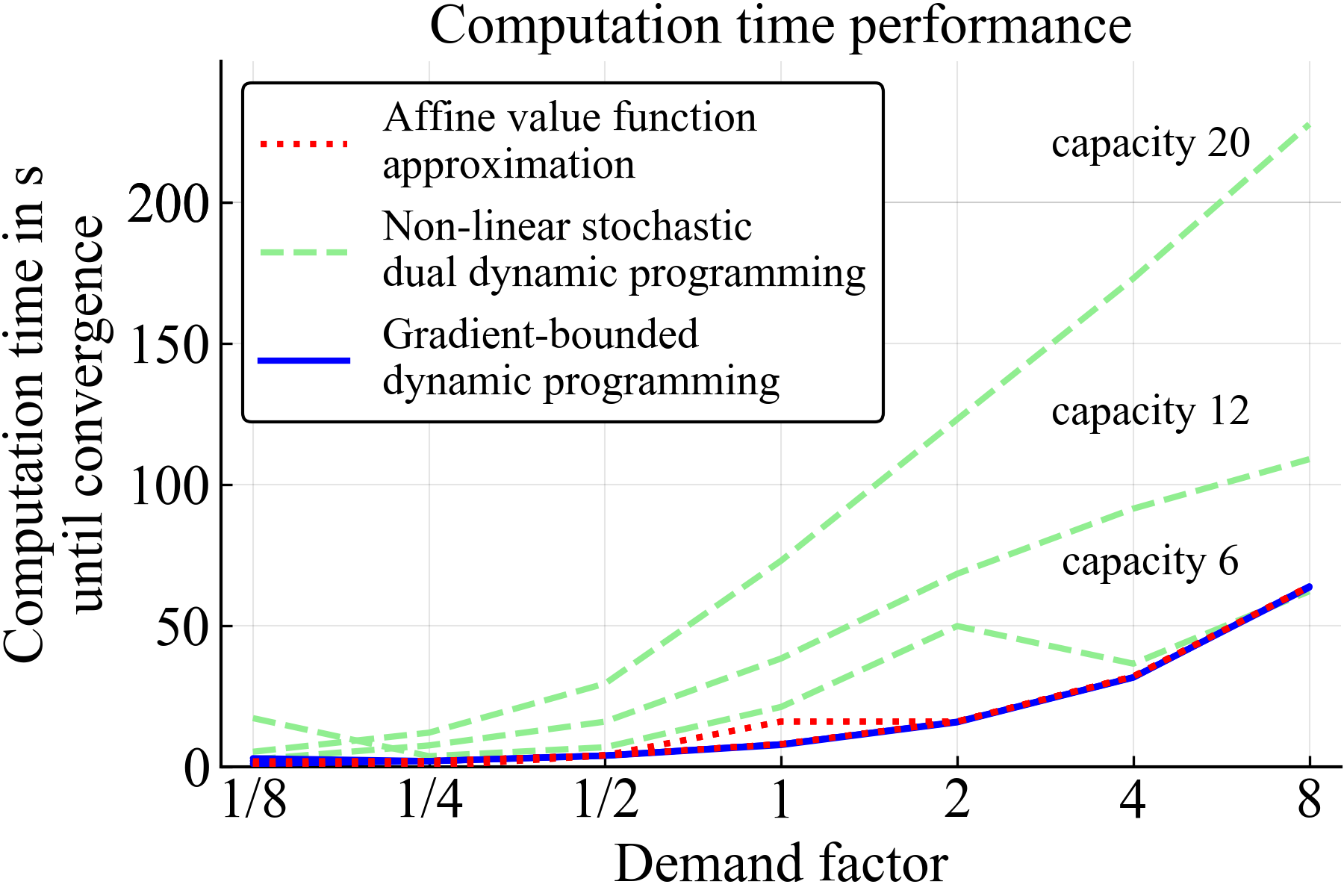}
\caption{Computational time to reach 95\% of the maximum expected profit with 99\% confidence for each of the three algorithms against demand factor and delivery slot capacities.}
\label{fig:Exp01_time}
\end{figure}

Affine value function approximation and gradient-bounded DP take similar time to converge to their respective optimal solutions. Computation time does not vary across slot capacities for these algorithms for all but one scenario: For demand factor 1 and slot capacity 6, affine value function approximation takes twice as long to converge compared with gradient-bounded DP. One possible explanation for this is that for this particular scenario, it might be computationally involved to find the optimal affine value function approximation since for a medium demand factor it is difficult to find a single affine value function approximation that works well for all sample paths. Some slots might sell out, some might not, which increases the need for a more flexible solution that gradient-bounded DP can provide.

Another issue observed is that the non-linear stochastic dual DP algorithm becomes computationally unstable under certain conditions. For example, for demand factor 8 and slot capacity 12, its profit generation performance decreases over time as can be seen in Fig.\ \ref{fig:Exp01_degradation} below. This might appear counter-intuitive at first, but is in line with our theoretical analysis from Section \ref{sec:gen_approxalg}-B: We conjecture that this is due to the difficulty of finding global maxima of non-convex optimisation problems. If the algorithm converges to a local maximum, the value function approximation is no longer guaranteed to be an upper bound on the exact value function (see Proposition \ref{pr:lagrange}). Over time, this then leads to a compounding of errors caused by suboptimality, i.e.\ instead of increasing, the expected profit decreases as more cuts are added to the approximate value function. A practical way to circumvent this problem is to compute the expected profit with 99\% confidence after each iteration and to pick the iteration which produces the maximum expected profit with 99\% confidence. In the example of Fig.\ \ref{fig:Exp01_degradation}, the best solution is found after the first iteration -- the optimal policy is dominated by pricing all slots at the maximum charge $\bar{d}$ for all time steps, since the high demand factor 8 almost guarantees that all slots will be sold out for any choice of admittable prices. Hence, over time invalid cuts accumulate, which results in a degradation of the profit performance.

\begin{figure}[b!]
\centering
\includegraphics[width=\mywidth\linewidth]{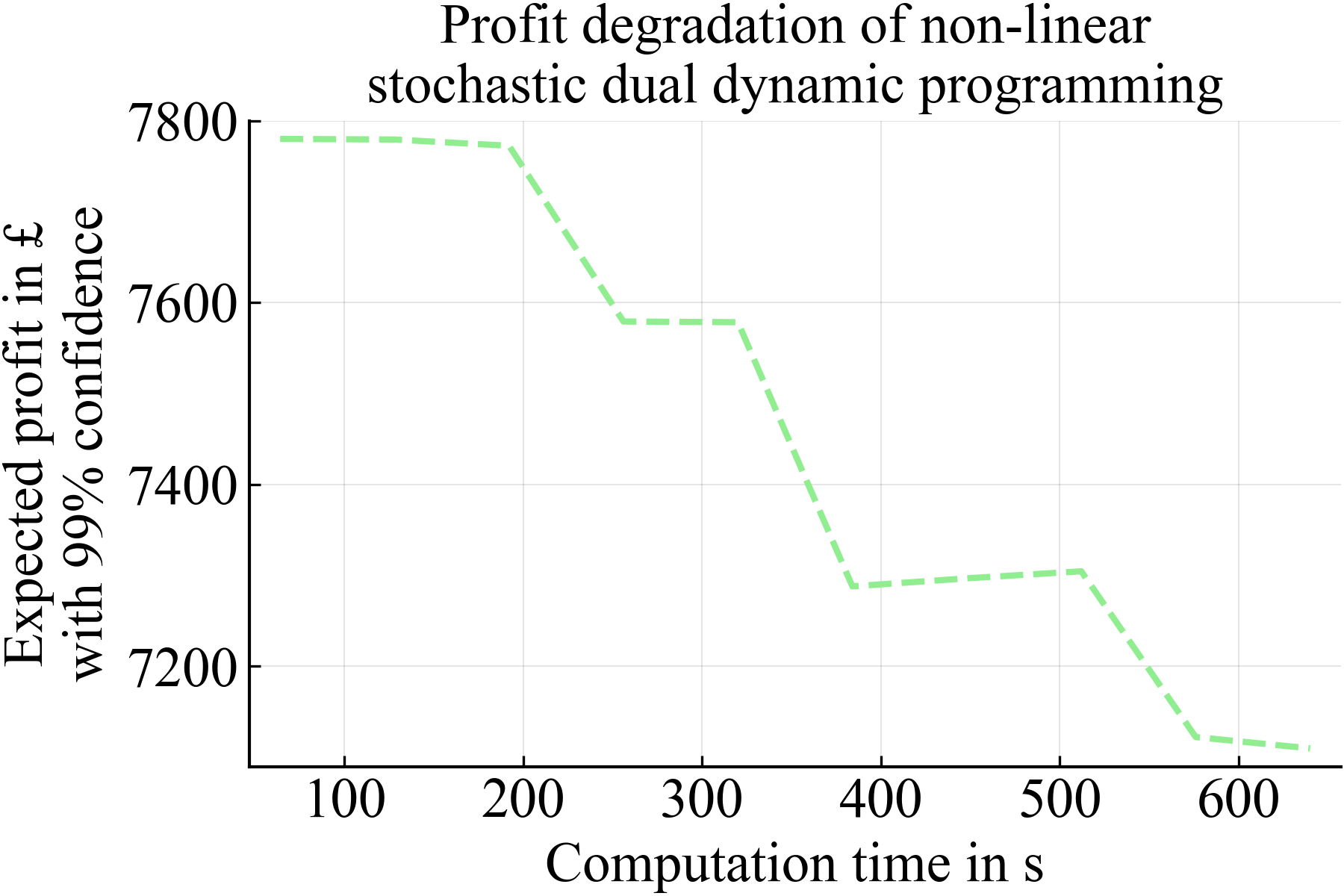}
\caption{Expected profits of non-linear stochastic dual DP for demand factor 8 and slot capacity 12 decrease as more iterations are added over time.}
\label{fig:Exp01_degradation}
\end{figure}

Comparing the expected profits obtained between the three algorithms, we observe that gradient-bounded DP always generates the highest expected profit with 99\% confidence or is within 1\% of the optimal value, when the demand factor is so high that demand saturates and all three algorithms perform very similarly. This saturation behaviour can be seen in Fig.\ \ref{fig:Exp01_profit} below, where we also show that for demand factors 1 and lower, gradient-bounded DP produces between 10 and 15\% more expected profit with 99\% confidence than affine value function approximation. At the same time, gradient-bounded DP performs similarly to non-linear stochastic dual DP in most scenarios. However, gradient-bounded DP generates up to 10\% more profit than non-linear stochastic dual DP for small demand factor $1/8$ and capacity 6 as well as for large demand factor 8 across all slot capacities.

Overall, we conclude that gradient-bounded DP performs best in this exact model experiment, because it outperforms affine value function approximation in terms of profit generation while being similarly fast and at the same time, gradient-bounded DP is more than four times faster than non-linear stochastic dual DP while generating very similar profit.

\begin{figure}[b!]
	\centering
	\includegraphics[width=\mywidth\linewidth]{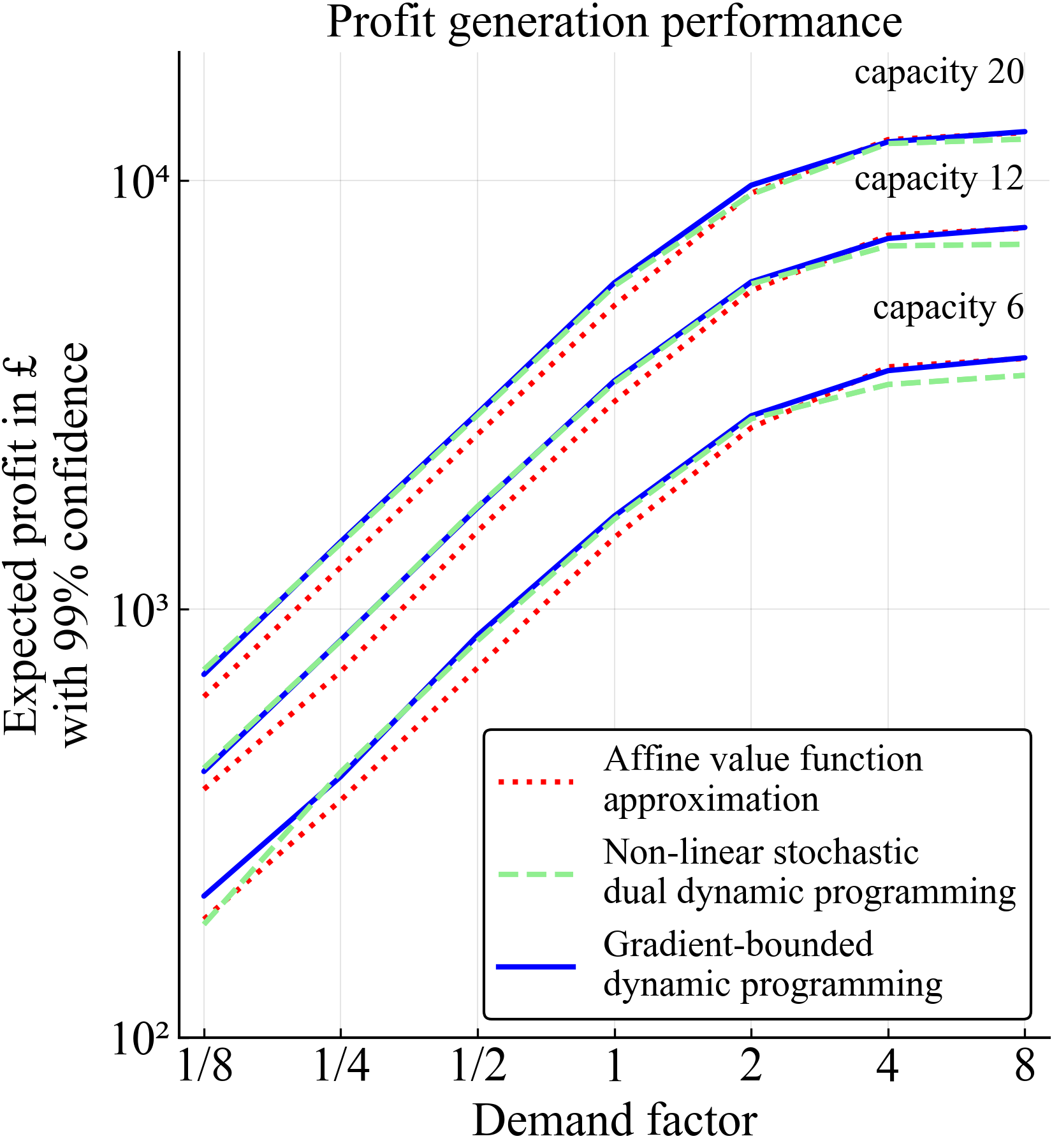}
	\caption{Expected profits with 99\% confidence of the three algorithms against demand factor and for all delivery slot capacities.}
	\label{fig:Exp01_profit}
\end{figure}

\subsection{Parameter sensitivity analysis}
We assume in the previous section that the parameters, in particular the customer arrival rate $\lambda$ and the customer choice model parameters $\beta_c, \beta_d$ and $\{\beta_s\}_{s\in S}$, are known exactly, which is not the case in practice. Hence, we now investigate how well the pricing policies obtained by the three algorithms in the previous section perform on perturbed models.

\subsubsection{Uncertain demand analysis}\label{sec:case_demand}
Consider the case that we derive value function approximations on the assumption that $\lambda=0.8$, but actually, this value differs, i.e.\ more or less customers arrive on the booking website than anticipated. We model one scenario where demand is 25\% lower, hence $\lambda=0.6$, and a second scenario where demand is 25\% higher, hence $\lambda=1$. The resulting behaviour is very similar for all algorithms, as can be seen in Fig.\ \ref{fig:Exp03}.

\begin{figure}[ht!]
\centering
\begin{subfigure}{\mywidth\linewidth}
\centering
\includegraphics[width=\textwidth]{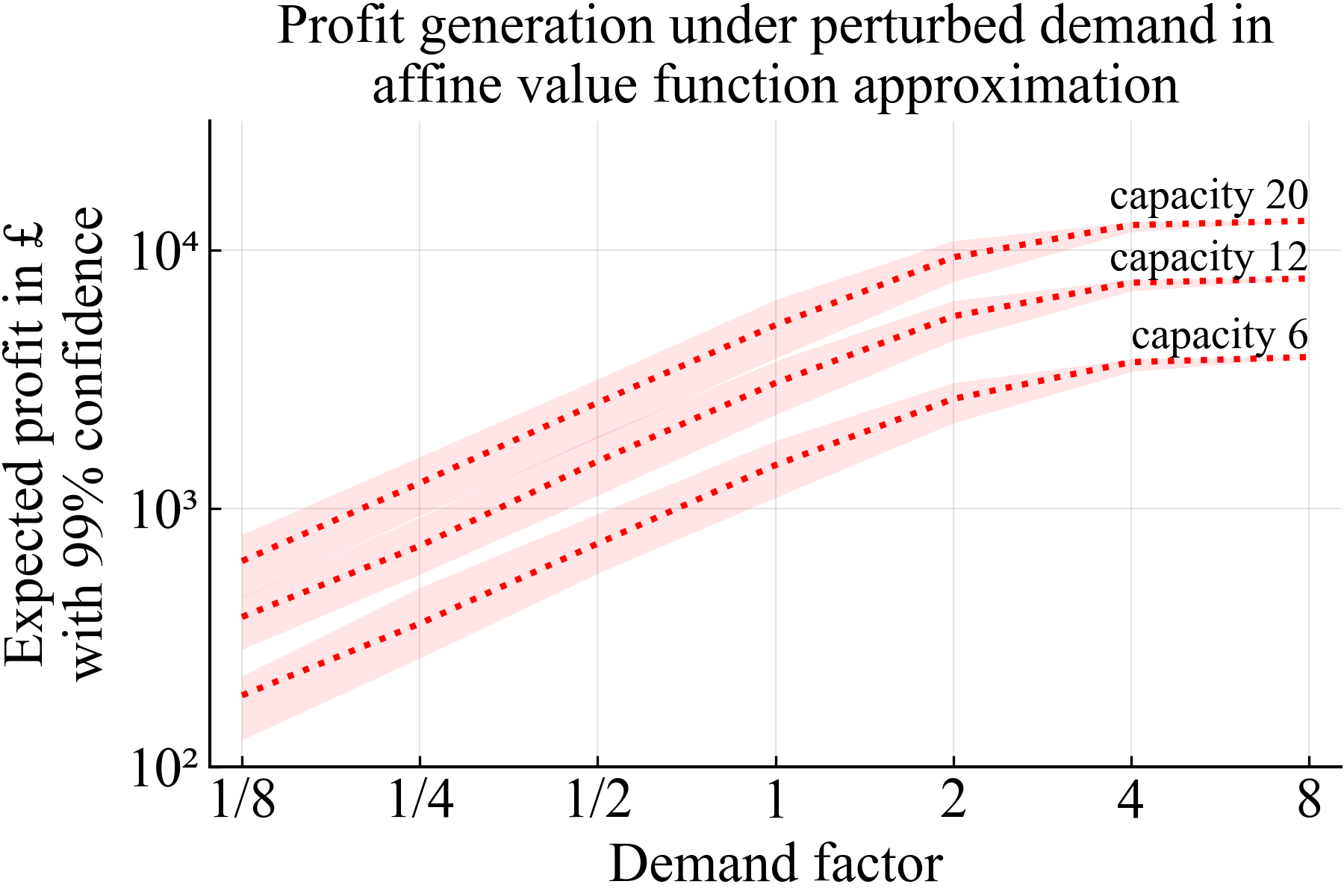}
\caption{}
\end{subfigure}\\
\vspace{8mm}
\begin{subfigure}{\mywidth\linewidth}
\centering
\includegraphics[width=\textwidth]{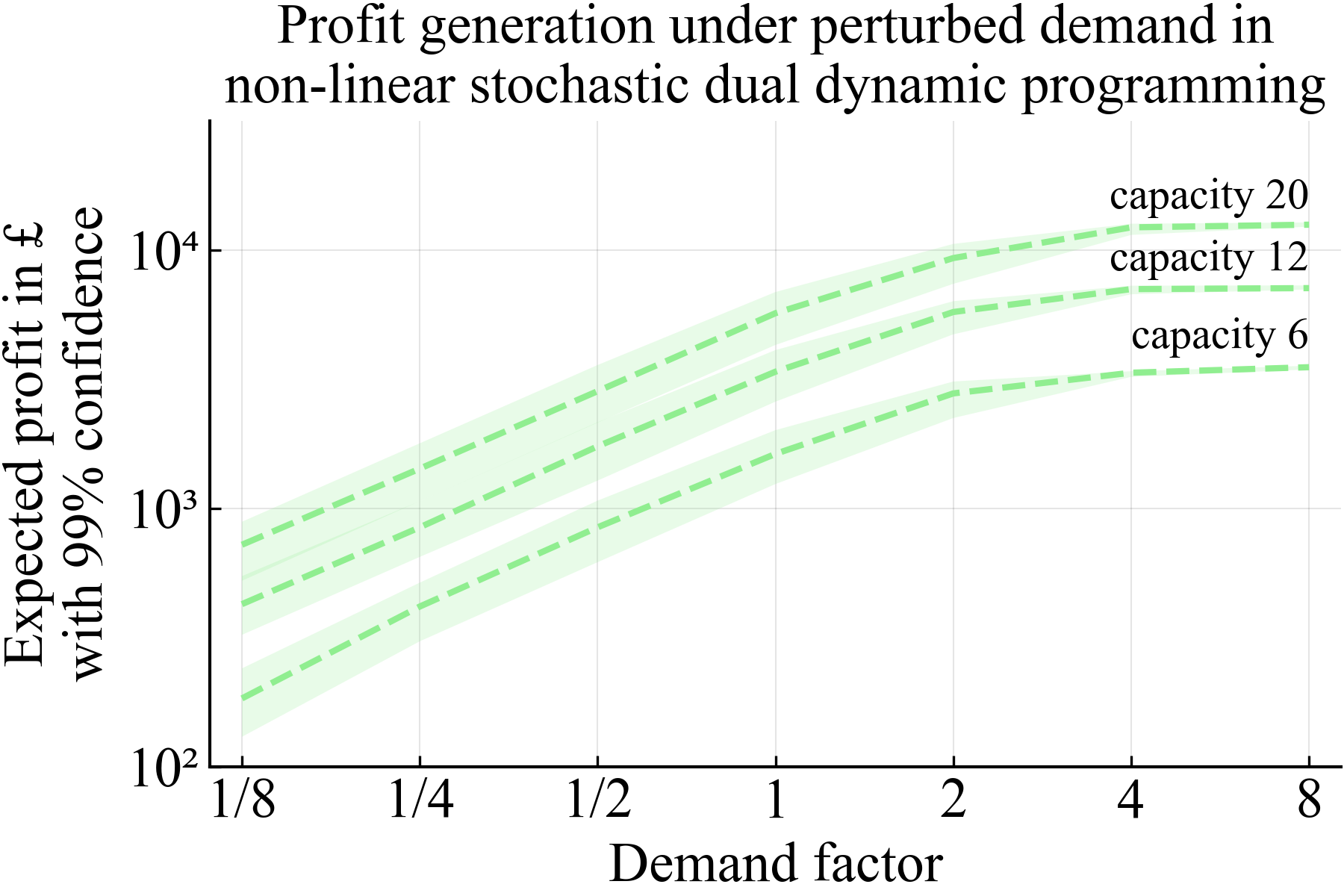}
\caption{}
\end{subfigure}\\
\vspace{8mm}
\begin{subfigure}{\mywidth\linewidth}
\centering
\includegraphics[width=\textwidth]{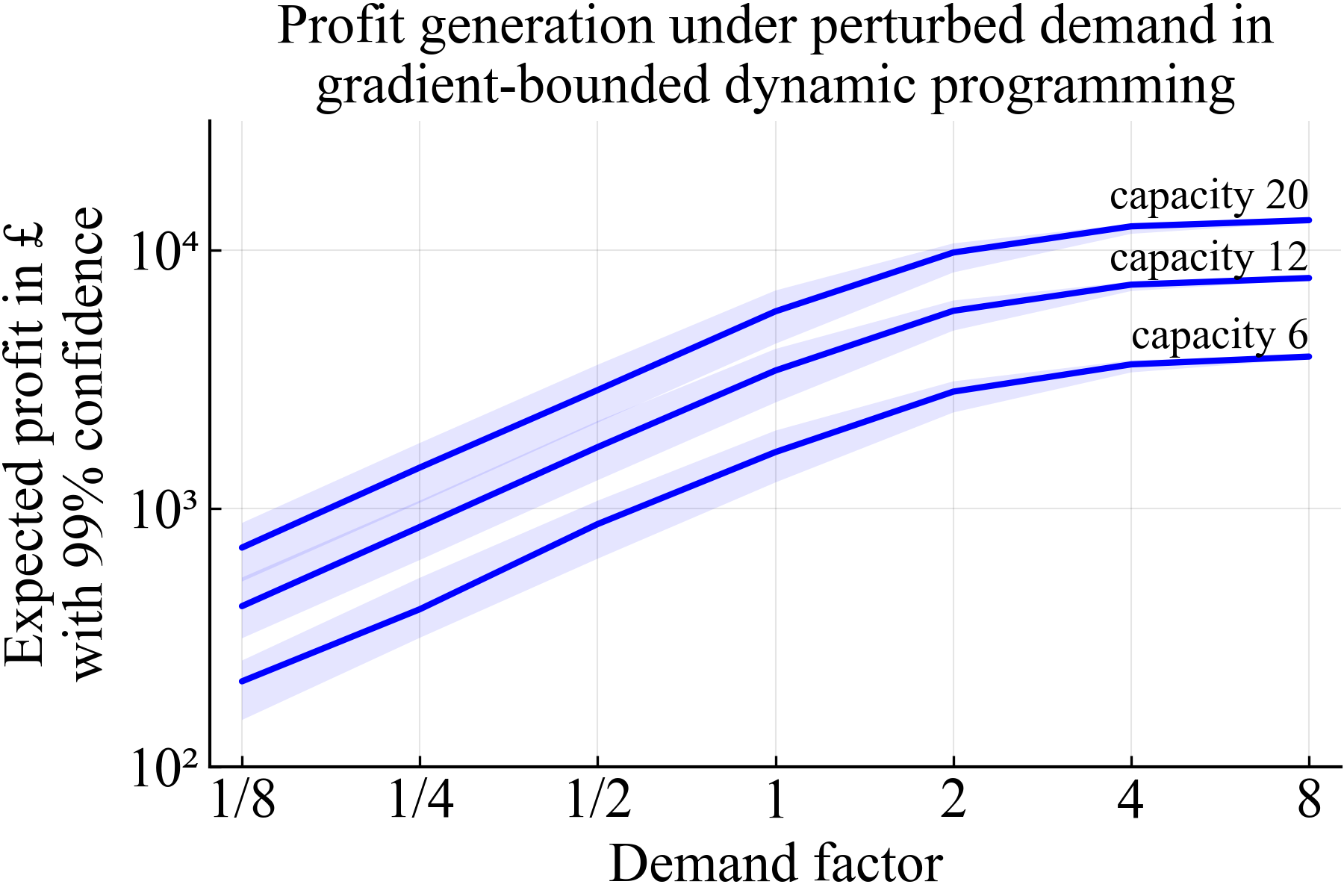}
\caption{}
\end{subfigure}
\caption{Expected profits with 99\% confidence for affine value function approximation (a), non-linear stochastic dual DP (b) and gradient-bounded DP (c) under perturbed customer arrival rate. Shaded regions indicate how expected profit with 99\% confidence increases when the customer arrival rate increases from 0.6 to 1. The lines correspond to an arrival rate of 0.8.}
\label{fig:Exp03}
\end{figure}
The numerical results are almost identical to the ones obtained in the exact model analysis experiment in Section V-\ref{sec:case_exact}. The relative performance in terms of profit generation and computational time across all three algorithms is very similar. The only difference to the results from Section V-\ref{sec:case_exact} are the absolute profit levels obtained for demand factors 2 and smaller, which scale proportionally with the customer arrival rate $\lambda$, as one would expect intuitively. For larger demand factors, the expected profit saturates across all $\lambda \in \{0.6,0.8,1\}$. This is because almost all slots are expected to sell out at the maximum delivery charge before the end of the time horizon. In conclusion, all three algorithms are equally capable of compensating for variations in the customer arrival rate and their relative performance remains unchanged in comparison with the exact model experiment in Section V-\ref{sec:case_exact}.

\subsubsection{Uncertain customer choice parameters}\label{sec:case_choice}
Consider the case that customer preferences across slots and prices are misspecified. To model this, we now corrupt the parameter estimates $\beta_c, \beta_d$ and $\{\beta_s\}_{s \in S\cup\{0\}}$ by additive Gaussian noise. This choice of distribution is justified because, in the limit as the number of data points used for estimating the customer choice parameters tends to infinity, the error between estimated and true customer choice parameter value vector is a Gaussian with zero mean \cite[Chapter 8.6]{TRAIN2009}. 

We consider three scenarios in which we vary the level of estimation error by setting the variance of the Gaussian to $\sigma^2 \in \{0.01,0.1,1\}$. With these three noise levels, we sample the sets of customer choice parameters, which we hold fixed for all validation runs in this experiment. Note that we do not have to worry about normalising the probability distribution, since the multinomial choice model is normalised for all possible parameter values. The numerical values used in our analysis are documented in Table \ref{tab:corrupted_beta} in Appendix \ref{ap:tables}. In practice, we could estimate the value of $\sigma^2$ from the data to test the robustness of candidate value function approximation algorithms with respect to uncertainty in the customer choice model. We document how the profit generation performance of the three algorithms degrades in comparison with the ideal scenario in the previous section in Fig.\ \ref{fig:Exp02} below.

\begin{figure}[ht!]
\centering
\begin{subfigure}{\mywidth\linewidth}
\centering
\includegraphics[width=\textwidth]{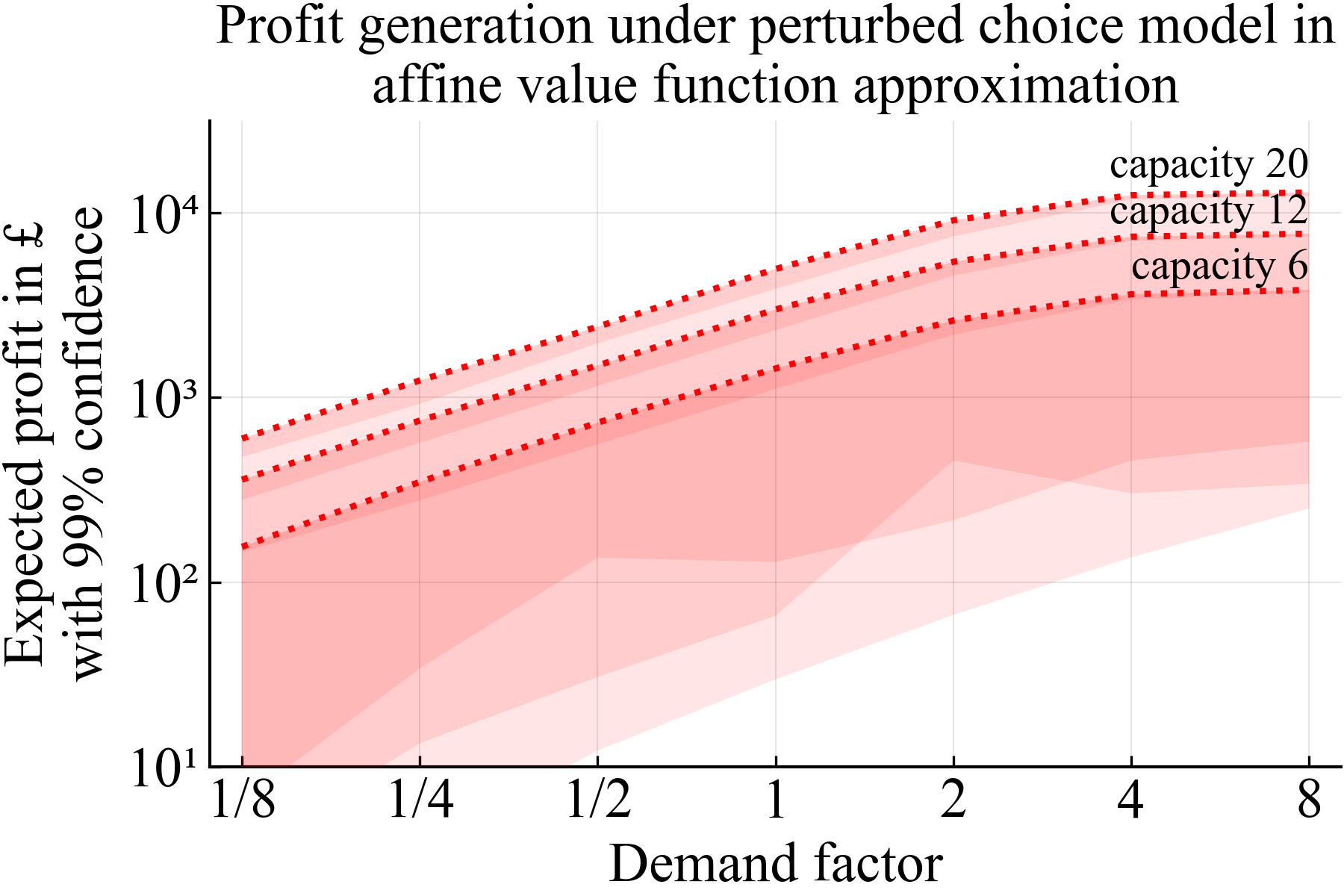}
\caption{}
\end{subfigure}\\
\vspace{8mm}
\begin{subfigure}{\mywidth\linewidth}
\centering
\includegraphics[width=\textwidth]{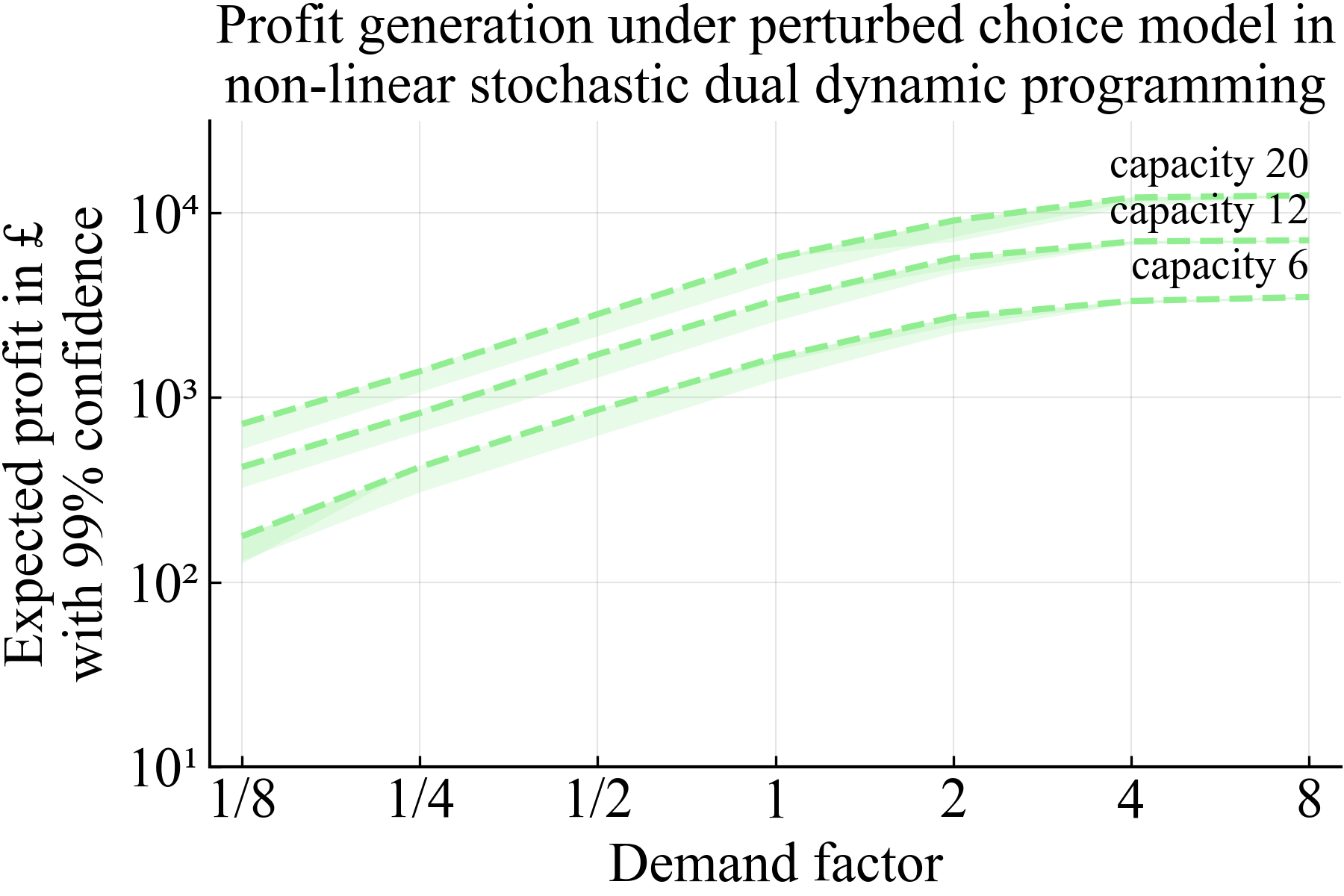}
\caption{}
\end{subfigure}\\
\vspace{8mm}
\begin{subfigure}{\mywidth\linewidth}
\centering
\includegraphics[width=\textwidth]{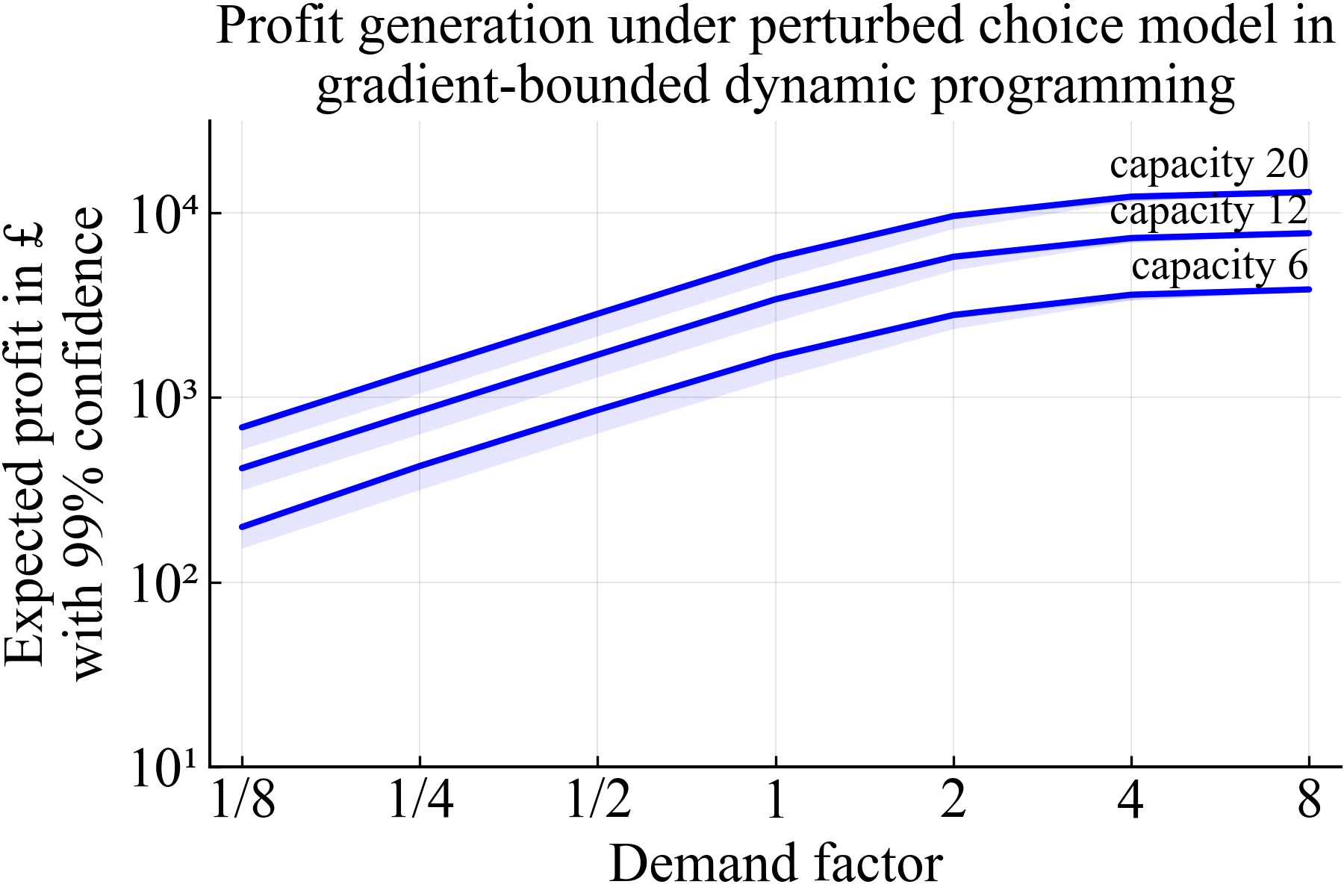}
\caption{}
\end{subfigure}
\caption{Expected profits with 99\% confidence for affine value function approximation (a), non-linear stochastic dual DP (b) and gradient-bounded DP (c) under perturbed customer choice model parameters from Table \ref{tab:corrupted_beta}. Lines indicate performance at $\sigma^2 =0.01$ and shaded regions indicate performance as $\sigma^2$ increases to 0.1 and 1.}
\label{fig:Exp02}
\end{figure}

As we see in Fig.\ \ref{fig:Exp02}(b) and (c), non-linear stochastic dual DP and gradient-bounded DP are both very robust against uncertainty in the customer choice model. Only for $\sigma^2 = 1$, there is a substantial degradations in profit generation performance. In contrast, Fig.\ \ref{fig:Exp02}(a) shows that even small uncertainties in the customer choice model have substantial negative impact on the profit generation performance of the affine value function approximation algorithm, decreasing expected profit with 99\% confidence by about an order of magnitude for $\sigma^2 = 1$. We conjecture that this is due to the lack of state feedback in the affine value function approximation solution as detailed in Section \ref{sec:gen_approxalg}-A: For any $t\in T$, the suggested optimal slot price vector is identical for all $x$ strictly inside the set of feasible states $X$, because the affine value function approximation has constant gradient for all these points. Since the other two algorithms both generate a piecewise affine approximate value function, gradients and hence optimal delivery prices vary depending on the particular state-time pair $(x,t)\in X\times T$.

Overall, we conclude that both gradient-bounded DP and non-linear stochastic dual DP  increase their relative profit-generation advantage over affine value function approximation when the parameter estimates of the customer choice model are not known exactly.

%Parameters with $\sigma^2=0.01$:
%$[-1.0232031285302317, -0.35461171615155845, 0.3127295774493245, 0.5860159564017332, 0.6211244370428417, 0.10558130378813879, 0.5179328316965476, -0.2562563153205137, -0.6204339423874199, -1.6762442152435435, -0.43132826675615155, -0.16207107620972153, -0.0036733701520729025, 0.24528331123298824, 0.07324627912657047, 0.5590398761465019, 0.2192131716260873, -2.514309409257119, -0.08210177832944861]$

%Parameters with $\sigma^2=0.1$:
%$[-1.0866167424974762, -0.3377982468095676, 0.3152451479011001, 0.6729036809148587, 0.6802463287774141, 0.11636841740817158, 0.5364769713529279, -0.3082667283107231, -0.4777701286841517, -1.7179585607086354, -0.3431473503320909, -0.06740471072031809, -0.029055790440968454, 0.18212363417819513, 0.041138785494962186, 0.6578930896824049, 0.22809212036927942, -2.5078425078621898, -0.13367394915207326]$

%Parameters with $\sigma^2=1$:
%$[-0.22424726525553618, -0.3337544113335653, -0.6642867115739731, 0.9197782513816395, -0.193994818448064, -1.0676110373196281, -0.5910257915580331, 0.9239365312215095, -1.0472631055743524, -1.8225102326744738, -0.684524109699818, 1.888678525985616, 0.7643694948605726, -1.480595845002781, 0.47300923813950313, 0.27063898266775643, 1.185631601411019, -2.6071405686252955, -1.203857429571459]$

\section{Conclusions and future work}\label{sec:conclusions}
In this paper, we analysed -- theoretically and numerically -- three approximate dynamic programming algorithms to find approximately optimal delivery slot prices in the revenue management problem in attended home delivery. From a control-theretical perspective, we identified limitations in the affine value function approximation algorithm and the non-linear stochastic dual dynamic programming algorithm. Through our numerical analysis, we showed how gradient-bounded dynamic programming can overcome these limitations. In our case study, we compared the performance of all three algorithms, i.e.\ profit-generation capabilities and computational time, in a number of scenarios. Overall, our numerical analysis shows that the gradient-bounded dynamic programming algorithm exhibits superior performance, since the affine value function approximation algorithm cannot reach its profit-generation capabilities and since the non-linear stochastic dual dynamic programming algorithm cannot reach its computational speed and computational stability properties.

Possible directions for future work include investigating the numerical performance of these algorithms for other network revenue management problems and extending the promising gradient-bounded dynamic programming approach to other customer decision models than multinomial logit.

%\begin{ack}
%We gratefully acknowledge the helpful...
%\end{ack}
\bibliographystyle{plain}
{\small\bibliography{CaseStudy_Ref01}}

\begin{thebibliography}{10}

\bibitem{FMI2018}
Food marketing institute.
\newblock \url{https://www.fmi.org/digital-shopper/}, 2018.

\bibitem{ASDEMIRETAL2009}
K.~Asdemir, V.~S. Jacob, and R.~Krishnan.
\newblock Dynamic pricing of multiple home delivery options.
\newblock {\em European Journal of Operational Research}, 196(1):246 -- 257,
  2009.

\bibitem{BECK2017}
A.~Beck.
\newblock {\em First-Order Methods in Optimization}.
\newblock Society for Industrial and Applied Mathematics, Philadelphia, PA,
  2017.

\bibitem{ROLANDBERGER2019}
A.~Belderok, M.~Einwachter, M.~van Aalst, J.~Winkelman, and R.~Veul.
\newblock Retailes: The dutch grocery sector in 2030.
\newblock \url{
  https://www.rolandberger.com/publications/publication_pdf/roland_berger_online_grocery_shopping.pdf},
  2019.
\newblock Accesssed on 16/06/2020.

\bibitem{BERTSEKAS2012}
D.~P. Bertsekas.
\newblock {\em Dynamic Programming and Optimal Control, Vol. II}.
\newblock Athena Scientific, 4th edition, 2012.

\bibitem{DAZANGO1987}
C.~F. Daganzo.
\newblock Modeling distribution problems with time windows: Part i.
\newblock {\em Transportation Science}, 21(3):171--179, 1987.

\bibitem{DONGETAL2009}
L.~Dong, P.~Kouvelis, and Z.~Tian.
\newblock Dynamic pricing and inventory control of substitute products.
\newblock {\em Manufacturing \& Service Operations Management}, 11(2):317--339,
  2009.

\bibitem{GALANTE2013}
N.~Galante, E.~G. L{\'o}pez, and S.~Monroe.
\newblock The future of online grocery in europe.
\newblock {\em McKinsey \& Company}, pages 22--31, 2013.
\newblock Available:
  \url{https://www.mckinsey.com/~/media/McKinsey/Industries/Retail/Our%20Insights/The%20future%20of%20online%20grocery%20in%20Europe/The_future_of_online_grocery.ashx}.

\bibitem{KOCH2020}
S.~Koch and R.~Klein.
\newblock Route-based approximate dynamic programming for dynamic pricing in
  attended home delivery.
\newblock {\em European Journal of Operational Research}, 2020.

\bibitem{LEBEDEVETAL2019A}
D.~Lebedev, P.~Goulart, and K.~Margellos.
\newblock A concave value function extension for the dynamic programming
  approach to revenue management in attended home delivery.
\newblock In {\em 2019 18th European Control Conference (ECC)}, pages
  999--1004, June 2019.

\bibitem{LEBEDEVETAL2019B}
D.~Lebedev, P.~Goulart, and K.~Margellos.
\newblock Dynamic programming for optimal delivery time slot pricing.
\newblock Technical report, 2019.
\newblock Available: \url{https://arxiv.org/abs/1910.11757}.

\bibitem{LEBEDEVETAL2020B}
D.~Lebedev, P.~Goulart, and K.~Margellos.
\newblock Gradient-bounded dynamic programming for submodular and concave
  extensible value functions with probabilistic performance guarantees.
\newblock Technical report, 2020.
\newblock Available: \url{https://arxiv.org/pdf/2006.02910.pdf}.

\bibitem{LEBEDEVETAL2020A}
D.~Lebedev, P.~Goulart, and K.~Margellos.
\newblock Gradient-bounded dynamic programming with submodular and concave
  extensible value functions.
\newblock In {\em 21st IFAC World Congress 2020}, 2020.
\newblock Available: \url{https://arxiv.org/pdf/2005.11213.pdf}.

\bibitem{PEREIRA1991}
M.~V.~F. Pereira and L.~M. V.~G. Pinto.
\newblock Multi-stage stochastic optimization applied to energy planning.
\newblock {\em Mathematical Programming}, 52(1):359--375, May 1991.

\bibitem{ONESPACE2018}
N.~Saunders.
\newblock Online grocery {\&} food shopping statistics.
\newblock
  \url{https://www.onespace.com/blog/2018/08/online-grocery-food-shopping-statistics/},
  2018.
\newblock Accessed on 16/06/2020.

\bibitem{SHAPIRO2011}
A.~Shapiro.
\newblock {Analysis of stochastic dual dynamic programming method}.
\newblock {\em European Journal of Operational Research}, 209(1):63--72, feb
  2011.

\bibitem{STRAUSSETAL2018}
A.~K. Strauss, R.~Klein, and C.~Steinhardt.
\newblock A review of choice-based revenue management: Theory and methods.
\newblock {\em European Journal of Operational Research}, 2018.

\bibitem{SUH2011}
M.~Suh and G.~Aydin.
\newblock Dynamic pricing of substitutable products with limited inventories
  under logit demand.
\newblock {\em IIE Transactions}, 43(5):323--331, 2011.

\bibitem{TRAIN2009}
K.~E. Train.
\newblock {\em Discrete Choice Methods with Simulation}.
\newblock Cambridge University Press, 2 edition, 2009.

\bibitem{YANG2017}
X.~Yang and A.~K. Strauss.
\newblock An approximate dynamic programming approach to attended home delivery
  management.
\newblock {\em European Journal of Operational Research}, 263(3):935--945,
  2017.

\bibitem{YANG2016}
X.~Yang, A.~K. Strauss, C.~S.~M. Currie, and R.~Eglese.
\newblock Choice-based demand management and vehicle routing in e-fulfillment.
\newblock {\em Transportation Science}, 50(2):473--488, 2016.

\bibitem{ZHANG2019}
S.~Zhang and X.~A. Sun.
\newblock Stochastic dual dynamic programming for multistage stochastic
  mixed-integer nonlinear optimization.
\newblock Technical report, 2019.
\newblock Available: \url{https://arxiv.org/pdf/1912.13278.pdf}.

\bibitem{ZOU2019}
J.~Zou, S.~Ahmed, and X.~A. Sun.
\newblock Stochastic dual dynamic integer programming.
\newblock {\em Mathematical Programming}, 175(1):461--502, May 2019.

\end{thebibliography}
\begin{appendices}
%\appendices[title]
\section{Problem reformulation for non-linear stochastic dual dynamic programming}\label{ap:biconcave_reform}
In this section, we show that we can re-write \eqref{eq:vstar}, a non-convex optimisation problem, such that the resulting objective function is biconcave in two newly introduced variables as follows.

Equation \eqref{eq:vstar} depends on $z$, which is defined to be integer-valued. Since $Q_{t+1}^{i-1}$ is the pointwise minimum of a finite number of affine functions in $z$, we can make the objective function in \eqref{eq:vstar} concave in a new variable $y\in \Re^n$, which we restrict to be in the convex hull of the original state-space conv$(X)$, i.e.\

\begin{align}
& v^*=\underset{\mu\in \mathcal{M}}{\min} \; \underset{\substack{{d\in D,}\\{y\in \mathrm{conv}(X)}}}{\max}\left\{\lambda\sum_{s\in S}P_s(d)\left(r+d_s+Q_{t+1}^{i-1}(y+1_s)\right)\right.\nonumber\\
& \hphantom{v^*:=}\;\left.+\left(1-\lambda\sum_{s\in S}P_s(d)\right)Q_{t+1}^{i-1}(y)+\innerprod{\mu}{x_{t+1}^i-y}\right\}.
\end{align}

This does not change the optimal solution since optimality can only be reached when $y=x_{t+1}^i$, such that $\innerprod{\mu}{x_{t+1}^i-y}=0$ for all $\mu \in \mathcal{M}$. Moreover, by a similar argument $z=x_{t+1}^i$ for optimality in \eqref{eq:vstar}. Hence, introducing $y$ does not change the overall solution.
Furthermore, as shown in \cite{DONGETAL2009,LEBEDEVETAL2019B}, the maximisation over $d$ can be expressed as a maximisation over transition probabilities, denoted by $p\in [0,1)^n$, by inverting the function $P_s(d)$ for all $(x,s,d) \in X \times S \times D$. Writing $P_s(d)$ now as the elements $p_s$ of the variable $p$ and replacing $d$ by the inversion of $P_s(d)$ as a function of $p_s$ for all $s\in S$, results in an objective function that is concave in $p$, i.e.\
\begin{align}\label{eq:biconcave}
v^*=\;&\underset{\mu\in \mathcal{M}}{\min} \; \underset{\substack{{p\in P,}\\{y\in \mathrm{conv}(X)}}}{\max}\left\{\lambda\sum_{s\in S}p_s\left(r\vphantom{\frac{1}{1}}\right.\right.\nonumber\\
\hphantom{v^*:=}\;&\left.\left. +\;\frac{1}{\beta_d} \left(\ln\left(\frac{p_s}{p_0}\right) -\beta_c-\beta_s)\right)+ Q_{t+1}^{i-1}(y+1_s)\right)\right.\nonumber\\
\hphantom{v^*:=}\;&\left.\left.+\left(1-\lambda\sum_{s\in S}p_s\right)Q_{t+1}^{i-1}(y)+\innerprod{\mu}{x_{t+1}^i-y}\right\}\right.,
\end{align}
where $P$ is formed by imposing the constraints that $p_s \geq 0$, for all $s\in S\cup\{0\}$, $\sum_{s\in S\cup\{0\}} p_s=1$ and $p_s \leq p_0\exp(\beta_c+\beta_s+\beta_d\underline{d})$, for all $s\in S$. One more constraint is given by $p_s \geq p_0\exp(\beta_c+\beta_s+\beta_d\bar{d})$, for all $s\in S$, if $x_s < \bar{x}_s$, for all $(x,s)\in X\times S$. We can express this constraint together with the non-negativity constraint on $p_s$ as $p_s\geq p_0\exp(\beta_c+\beta_s+\beta_d\bar{d}) - \exp(\alpha(x_s-\bar{x}))$, for all $s\in S$, where $\alpha > 0$ is sufficiently large to make the exponential term in the constraint negligible for $x_s < \bar{x}$, e.g.\ we use $\alpha=10$ for our numerical studies.

Notice that for any fixed $p\in P$, the objective function is concave in $y$, because $y$ only appears in $Q_{t+1}^{i-1}$, which is concave in $y$ and in the affine term $\innerprod{\mu}{x_{t+1}^i-y}$. Similarly to \cite[Theorem 1]{DONGETAL2009} and \cite[Lemma 3(i)]{LEBEDEVETAL2019B}, it can be shown that for any $y\in \mathrm{conv}(X)$, the objective function is concave in $p$. It follows that the resulting objective function is biconcave in $(p,y)$, but has non-linear constraints. 

In general, this is a difficult problem. However, since the optimal solution is at $y=x_{t+1}^i$, we can fix $y$ to this value and find the optimal value of $p$ in the maximisation in \eqref{eq:biconcave}. We then use these values for $y$ and $p$ to initialise a non-linear solver, which produces quite reliable results as shown in the numerical example in Section \ref{sec:casestudy}.

\section{Tables}\label{ap:tables}
\begin{table}[b]
\begin{center}
\renewcommand{\arraystretch}{\mystretch}
\caption{Estimation error-corrupted customer choice parameters.}
\begin{tabular}{l | l | l | p{4cm}}\label{tab:corrupted_beta}
$\sigma^2$ & $\beta_c$ & $\beta_d$ & $\{\beta_s\}_{s\in S}$ \\
\hline
0.01 & -2.5143 & -0.0821 & $\{-1.0232, -0.3546, 0.3127, $\\
& & & $0.5860, 0.6211, 0.1056, $\\
& & & $0.5179, -0.2563, -0.6204, $\\
& & & $-1.6762, -0.4313, -0.1621, $\\
& & & $-0.0037, 0.2453, 0.0732, $\\
& & & $0.5590, 0.2192\}$\\
0.1 & -2.5078 & -0.1337 & $\{-1.0866, -0.3378, 0.3152, $\\
& & & $0.6729, 0.6802, 0.1164, $\\
& & & $0.5365, -0.3083, -0.4778, $\\
& & & $-1.7180, -0.3431, -0.0674, $\\
& & & $-0.0291, 0.1821, 0.0411, $\\
& & & $0.6579, 0.2281\}$\\
1 & -2.6071 & -1.2039 & $\{-0.2242, -0.3338, -0.6643, $\\
& & & $0.9198, -0.1940, -1.068, $\\
& & & $-0.5910, 0.9239, -1.0473, $\\
& & & $-1.8225, -0.6845, 1.8887, $\\
& & & $0.7644, -1.4806, 0.4730, $\\
& & & $0.2706, 1.1856\}$
\end{tabular}
\end{center}
\end{table}
\end{appendices}
\end{document}